\newtheorem{thm}{\bf Theorem}[section]
\newtheorem{lem}[thm]{\bf Lemma}
\newtheorem{prop}[thm]{\bf Proposition}
\theoremstyle{definition}
\newtheorem{defn}[thm]{\bf Definition}
\newtheorem{ex}[thm]{\bf Example}
\newtheorem*{rem*}{\bf Remark}
\numberwithin{equation}{section}
\newcommand{\BB}{B}
\newcommand{\BBad}{\BB_\text{ad}}
\newcommand{\BBaff}[1][aff]{\BB^\text{#1}}
\newcommand{\C}{\mathcal{C}}
\newcommand{\PP}{\mathcal{P}}
\newcommand{\PPaff}[1][aff]{\PP^\text{#1}}
\newcommand{\g}{\mathfrak{g}}
\newcommand{\h}{\mathfrak{h}}
\newcommand{\Q}{\mathbf{Q}}
\newcommand{\R}{\mathcal{R}}
\newcommand{\Y}{\mathcal{Y}}
\newcommand{\Z}{\mathbf{Z}}
\newcommand{\veps}{\varepsilon}
\newcommand{\vphi}{\varphi}
\DeclareMathOperator{\wt}{wt}
\newcommand{\owt}{\overline{\wt}}
\newcommand{\hwc}{highest weight crystal}
\newcommand{\hwcs}{highest weight crystals}
\newcommand{\Yw}{Young wall}
\newcommand{\Yws}{Young walls}
\newcommand{\rYw}{reduced Young wall}
\newcommand{\rYws}{reduced Young walls}
\newcommand{\Ucry}[1][\g]{$U_q(#1)$-crystal}
\newcommand{\Ucrys}[1][\g]{$U_q(#1)$-crystals}
\newcommand{\Upcry}[1][\g]{$U_q^{'}(#1)$-crystal}
\newcommand{\Upcrys}[1][\g]{$U_q^{'}(#1)$-crystals}
\newcommand{\UAcry}{\Ucry[A_2^{(2)}]}
\newcommand{\UAcrys}{\Ucrys[A_2^{(2)}]}
\newcommand{\UApcry}{\Upcry[A_2^{(2)}]}
\newcommand{\UApcrys}{\Upcrys[A_2^{(2)}]}
\newcommand{\etal}{\textit{et al}.}
\newcommand{\bop}{\bigoplus}
\newcommand{\op}{\oplus}
\newcommand{\ot}{\otimes}
\newcommand{\ltriz}{ \bsegment \move(0 0)\lvec(10 0)\lvec(10 10)\lvec(0
0) \htext(6 2){\tiny{0}} \esegment \rmove(0 10)}
\newcommand{\ltrio}{ \bsegment \move(0 0)\lvec(10 0)\lvec(10 10)\lvec(0
0) \htext(6 2){\tiny{1}} \esegment \rmove(0 10)}
\newcommand{\utriz}{ \bsegment \move(0 0)\lvec(0 10)\lvec(10 10)\lvec(0
0) \htext(2 6){\tiny{0}} \move(0 10) \esegment \rmove(0 10)}
\newcommand{\utrio}{ \bsegment \move(0 0)\lvec(0 10)\lvec(10 10)\lvec(0
0) \htext(2 6){\tiny{1}} \esegment \rmove(0 10)}
\newcommand{\recz}{ \bsegment \move(0
0)\lvec(10 0)\lvec(10 5)\lvec(0 5)\lvec(0 0) \htext(4 1){\tiny{0}} \esegment \rmove(0 5)}
\newcommand{\reco}{ \bsegment \move(0
0)\lvec(10 0)\lvec(10 5)\lvec(0 5)\lvec(0 0) \htext(4 1){\tiny{1}} \esegment \rmove(0 5)}
\newcommand{\recd}{ \bsegment \move(0
0)\lvec(10 0)\lvec(10 10)\lvec(0 10)\lvec(0 0) \vtext(5 2){...} \esegment \rmove(0 10)}
\newcommand{\grecz}{ \bsegment \move(0
0)\lvec(10 0)\lvec(10 5)\lvec(0 5)\lvec(0 0) \lfill f:0.8 \htext(4 1){\tiny{0}} \esegment \rmove(0 5)}
\newcommand{\vpic}[1][0.5em]{\vskip #1}
\newcommand{\hpic}[1][3]{\hskip #1mm}
\newcommand{\abs}[1]{\left\vert#1\right\vert}
\newcommand{\set}[1]{\left\{#1\right\}}
\keywords{quantum affine algebra, Young wall, perfect crystal,
adjoint crystal}
\subjclass[2010] {17B37, 17B67, 20G42, 81R50}
\begin{document}

\title[Young wall model for $A_2^{(2)}$-type adjoint crystals]{Young wall model for $A_2^{(2)}$-type \\ adjoint crystals}%
\author{Seok-Jin Kang}
\address{Joeun Mathematical Research Institute,
441 Yeoksam-ro, Gangnam-gu, Seoul 06196, Korea}%
\email{soccerkang@hotmail.com}


\begin{abstract}

We construct a \Yw\ model for higher level $A_2^{(2)}$-type adjoint
crystals. The \Yws\ and \rYws\ are defined in connection with affine
energy function. We prove that the affine crystal consisting of
\rYws\ provides a realization of \hwcs\ $B(\lambda)$ and
$B(\infty)$.
\end{abstract}
\maketitle

\section*{Introduction}

\vskip 3mm

This paper is a twisted version of \cite{Jung}, where we constructed
a \Yw\ model for $A_1^{(1)}$-type adjoint crystals. Let $\g$ be an
affine Kac-Moody algebra containing a canonical finite dimensional
simple Lie algebra $\g_0$. The \textit{adjoint crystal} is a \Upcry\
of the form
\begin{equation*}
    B = B(0)\op B(\theta) \op \cdots \op B(l \theta) ~~ (l \geq 0),
\end{equation*}
where $\theta$ is the (shortest) maximal root of $\g_0$. In
\cite{BFKL}, Benkart \etal\ introduced the notion of adjoint
crystals and proved that $B = B(0) \op B(\theta)$ is a perfect
crystal of level 1 for \textit{all} quantum affine algebras
(including exceptional types).

The \textit{perfect crystals}, whose theory was developed by Kang
\etal\ \cite{KMN1, KMN2}, provide a representation theoretic
foundation for the theory of vertex models in mathematical physics.
In particular, every perfect crystal yields a \textit{path
realization} of the \hwcs\ $B(\lambda)$ and $B(\infty)$ for any
dominant integral weight $\lambda \in P^+$.

Thus it is an important and interesting problem to find an explicit
construction of perfect crystals. In \cite{KMN1, KMN2, KKM}, it was
shown that the higher level adjoint crystals are perfect for the
quantum affine algebras of type $A_n^{(1)}$, $C_n^{(1)}$,
$A_{2n}^{(2)}$, $D_{n+1}^{(2)}$ with explicit description of
0-arrows. In \cite{FOS, SS}, the adjoint crystals are proven to be
perfect for the affine types $A_{2n-1}^{(2)}$, $B_n^{(1)}$,
$D_n^{(1)}$. Still, it is an open problem to prove the same
statement holds for exceptional affine types.

The \textit{\Yws}, introduced by Kang in \cite{Kang03}, provide a
combinatorial model for level 1 \hwcs\ over quantum affine algebras
of classical type. They consist of colored blocks of various shapes
on a given ground-state wall following the pattern prescribed for
each affine type. The actions of Kashiwara operators are given
explicitly by combinatorics of \Yws\ and the level 1 \hwcs\ are
characterized as affine crystals consisting of \textit{\rYws}.
Moreover, the \textit{characters} can be computed easily by counting
the number of colored blocks in each \rYw. For higher level \hwcs,
we use multi-layer level 1 \Yws\ to obtain a realization \cite{KL}.

All the \Yws\ mentioned above arise from the perfect crystals of
fundamental representations $V(\omega_1)$. In some cases, they
coincide with level 1 adjoint crystals. In \cite{Jung}, Jung \etal\
constructed a \Yw\ model for higher level $A_1^{(1)}$-type adjoint
crystals and proved that the \Ucry[A_1^{(1)}] consisting of \rYws\
provides a realization of higher level \hwc\ $B(\lambda)$ and
$B(\infty)$. The key point of this construction lies in that all the
higher level \Yws\ arising from the adjoint crystals are of
thickness $\leq 1$.

In this paper, we extend the results of \cite{Jung} to the case of
twisted quantum affine algebra $U_q(A_2^{(2)})$. We first recall the
basic properties of $U_q(A_2^{(2)})$ and give a detailed account of
affinization of adjoint crystals, (affine) energy function and
(affine) path realization of higher level \hwcs\ $B(\lambda)$. Then
we construct a \Yw\ model for the $A_2^{(2)}$-type adjoint crystals.
The crucial point is that we stack two 1-blocks successively to
define the Kashiwara operators $\tilde{f_1}$ and $\tilde{F_1}$
taking the identification (\ref{eqn: identification}) into account.

The combinatorics of \Yws\ are explained in Section \ref{Sec:
Crystal structure}, where we define the (affine) crystal structure
on the set of \Yws. The \rYws\ will be defined in connection with
affine energy function. Our main theorem states that the
$A_2^{(2)}$-type affine crystal $\R(\lambda)$ consisting of reduced
\Yws\ on $\lambda$ provides a realization of the \hwc\ $B(\lambda)$
over $U_q(A_2^{(2)})$. The final section is devoted to the \Yw\
realization of $B(\infty)$.

We hope our construction will lead to further developments in \Yw\
model theory associated with higher level adjoint crystals (and
general perfect crystals) for all quantum affine algebras.

\vskip 3mm

{\bf Acknowledgments.} The author would like to express his sincere
gratitude to In-Je Lee, S.J. for his great help in writing this
manuscript.

\vskip 5mm

\section{Quantum affine algebras}

\vskip 3mm

Let $I = \set{0, 1, \cdots, n}$ be a finite index set and let $(A,
P, \Pi, P^\vee, \Pi^\vee)$ be an affine Cartan datum consisting of:
\begin{enumerate}
    \item a symmetrizable Cartan matrix $A = (A_{ij})_{i, j \in I}$ of affine type,
    \item a free abelian group $P$ of rank $n+2$,
    \item a $\Q$-linearly independent set $\Pi = \set{\alpha_i \in P ~|~ i \in I}$,
    \item $P^\vee = \text{Hom}_\Z (P, \Z)$,
    \item $\Pi^\vee = \set{h_i \in P^\vee ~|~ i \in I}$
\end{enumerate}
satisfying $\langle h_i, \alpha_j \rangle = a_{ij}$ $(i, j \in I)$.

Set $\h = \Q \ot_\Z P^\vee$. Take an element $d \in P^\vee$ such
that $\alpha_i(d) = \delta_{i,0}$ $(i \in I)$. The
\textit{fundamental weights} $\Lambda_i \in \h^\ast$ $(i \in I)$ are
defined by
\begin{equation*}
    \begin{array}{ll}
        \Lambda_i (h_j) = \delta_{ij}, & \Lambda_i(d) = 0 \ \ \text{for $i, j \in I$}.
    \end{array}
\end{equation*}

There is a non-degenerate symmetric bilinear form $(\ ,\ )$ on $\h^\ast$ satisfying
\begin{equation*}
    \langle h_i, \lambda \rangle = \frac{2(\alpha_i, \lambda)}{(\alpha_i, \alpha_i)} \ \ \text{for $\lambda \in P$, $i \in I$}.
\end{equation*}
(See \cite{Kac90} for more details.)

Let $q$ be an indeterminate and set
\begin{equation*}
    \begin{array}{ll}
        q_i = q^{(\alpha_i, \alpha_i)/2}, & [n]_i = \displaystyle \frac{q_i^n - q_i^{-n}}{q_i - q_i^{-1}}.
    \end{array}
\end{equation*}

\begin{defn}
    The \textit{quantum affine algebra} $U_q(\g)$ associated with $(A, P, \Pi, $ $P^\vee, \Pi^\vee)$
    is the $\Q(q)$-algebra with 1 generated by $e_i$, $f_i$ $(i \in I)$, $q^h$ $(h \in P^\vee)$ subject to the defining relations
    \begin{equation}
        \begin{aligned}
            & q^0 = 1, \ q^h q^{h'} = q^{h+h'} \ (h, h' \in P^\vee), \\
            & q^h e_j q^{-h} = q^{\langle h, \alpha_j \rangle} e_j, \\
            & q^h f_j q^{-h} = q^{-\langle h, \alpha_j \rangle} f_j \ \text{$(h \in P^\vee$, $j \in I)$}, \\
            & e_i f_j - f_j e_i = \delta_{ij} \frac{K_i - K_i^{-1}}{q_i - q_i^{-1}}, \\
            & \sum_{k=0}^{1 - a_{ij}} (-1)^k e_i^{(1 - a_{ij} - k)} e_j e_i^{(k)} = 0 \ (i \neq j), \\
            & \sum_{k=0}^{1 - a_{ij}} (-1)^k f_i^{(1 - a_{ij} - k)} f_j f_i^{(k)} = 0 \ (i \neq j),
        \end{aligned}
    \end{equation}
    where $K_i = q^{\frac{(\alpha_i, \alpha_i)}{2} h_i}$, $e_i^{(k)} = e_i^k / [k]_i !$, $f_i^{(k)} = f_i^k / [k]_i !$.
\end{defn}

The subalgebra $U^{'}_q (\g)$ generated by $e_i, f_i, K_i^{\pm 1}$ $(i \in I)$ is also called the \textit{quantum affine algebra}.

We denote by $c = c_0 h_0 + \cdots + c_n h_n$ the \textit{canonical central element} and $\delta = d_0 \alpha_0 + \cdots + d_n \alpha_n$ the \textit{null root}, where the coefficients $c_i$, $d_i$ $(i \in I)$ are given in \cite{Kac90}. Then we have
\begin{equation*}
    \begin{aligned}
        P       & = \bop_{i \in I} \Z \Lambda_i \op \Z \left( \frac{1}{d_0} \delta \right), \\
        P^\vee  & = \bop_{i \in I} \Z h_i \op \Z d.
    \end{aligned}
\end{equation*}
Note that $d_0 = 1$ except for the affine $A_{2n}^{(2)}$-type, in which case $d_0 = 2$.

Let $\text{cl}: P \to P / \Z(\frac{1}{d_0} \delta)$ be the canonical projection and let
\begin{equation*}
    P_\text{cl} := \text{cl}(P) = \bop_{i \in I} \Z \text{cl}(\Lambda_i).
\end{equation*}
Moreover, we define
\begin{equation*}
    P^+ = \set{\lambda \in P ~|~ \langle h_i, \lambda \rangle \geq 0 \ \ \text{for all $i \in I$}}
\end{equation*}
and set $P_\text{cl}^+ := \text{cl} (P^+)$. The \textit{level} of $\lambda \in P$ is defined to be $\langle c, \lambda \rangle \in \Z$.

\begin{defn}
    A \textit{\Ucry} (resp. \textit{\Upcry}) is a set $\BB$ together with the maps $\tilde{e_i}$, $\tilde{f_i}: \BB \to \BB ~ \cup ~ \set{0}$, $\veps_i$, $\vphi_i: \BB \to \Z ~ \cup ~ \set{-\infty}$, $\wt: \BB \to P$ (resp. $\owt: \BB \to P_\text{cl}$) satisfying the following conditions:
    \begin{enumerate}
        \item for each $i \in I$, we have $\vphi_i(b) = \veps_i(b) + \langle h_i, \wt(b) \rangle$ (resp. $\vphi_i(b) = \veps_i(b) + \langle h_i, \owt(b) \rangle$),
        \item $\wt(\tilde{e_i} b) = \wt (b) + \alpha_i$ (resp.  $\owt(\tilde{e_i} b) = \owt(b) + \alpha_i$) if $e_i b \in \BB$,
        \item $\wt(\tilde{f_i} b) = \wt (b) - \alpha_i$ (resp.  $\owt(\tilde{f_i} b) = \owt(b) - \alpha_i$) if $f_i b \in \BB$,
        \item $\veps_i (\tilde{e_i} b) = \veps_i (b) - 1$, $\vphi_i (\tilde{e_i} b) = \vphi_i (b) + 1$ if $\tilde{e_i} b \in \BB$,
        \item $\veps_i (\tilde{f_i} b) = \veps_i (b) + 1$, $\vphi_i (\tilde{f_i} b) = \vphi_i (b) - 1$ if $\tilde{f_i} b \in \BB$,
        \item $\tilde{f_i} b = b^{'}$ if and only if $b = \tilde{e_i} b^{'}$ for all $b, b^{'} \in \BB$, $i \in I$,
        \item if $\vphi_i (b) = -\infty$, then $\tilde{e_i} b = \tilde{f_i} b = 0$.
    \end{enumerate}
\end{defn}

Let $\BB$ be a \Upcry. For an element $b \in \BB$, we define
\begin{equation}
    \begin{aligned}
        \veps (b)   & = \sum_{i \in I} \veps_i (b) \text{cl} (\Lambda_i), \\
        \vphi (b)   & = \sum_{i \in I} \vphi_i (b) \text{cl} (\Lambda_i).
    \end{aligned}
\end{equation}
For simplicity, we will often write $\Lambda_i$ for $\text{cl}(\Lambda_i)$ $(i \in I)$. Also we will write $\wt (b)$ for $\owt(b)$ when there is no danger of confusion.

Let $\BB_1$, $\BB_2$ be \Ucrys\ or \Upcrys. The \textit{tensor product} $\BB_1 \ot \BB_2 = \BB_1 \times \BB_2$ is defined by
\begin{equation} \label{eq:tensor}
    \begin{aligned}
        \wt (b_1 \ot b_2)           & = \wt (b_1) + \wt (b_2), \\
        \veps_i (b_1 \ot b_2)       & = \max (\veps_i (b_1), \veps_i (b_2) - \langle h_i, \wt (b_1) \rangle), \\
        \vphi_i (b_1 \ot b_2)       & = \max (\vphi_i (b_2), \vphi_i (b_1) + \langle h_i, \wt (b_2) \rangle), \\
        \tilde{e_i} (b_1 \ot b_2)   & = \begin{cases}
            \tilde{e_i} b_1 \ot b_2 & \text{if $\vphi_i (b_1) \geq \veps_i (b_2)$}, \\
            b_1 \ot \tilde{e_i} b_2 & \text{if $\vphi_i (b_1) < \veps_i (b_2)$},
        \end{cases} \\
        \tilde{f_i} (b_1 \ot b_2)   & = \begin{cases}
            \tilde{f_i} b_1 \ot b_2 & \text{if $\vphi_i (b_1) > \veps_i (b_2)$}, \\
            b_1 \ot \tilde{f_i} b_2 & \text{if $\vphi_i (b_1) \leq \veps_i (b_2)$}.
        \end{cases}
    \end{aligned}
\end{equation}

Let $\BB$ be a \Upcry. The \textit{affinization} of $\BB$ is the \Ucry
\begin{equation*}
    \BBaff := \set{b(m) ~|~ b \in \BB, \ m \in \Z},
\end{equation*}
where the \Ucry\ structure is defined by
\begin{equation}
    \begin{aligned}
        \wt (b(m))          & = \owt(b) + m \delta, \\
        \tilde{e_0} (b(m))  & = (\tilde{e_0} b)(m+1), \ \tilde{f_0} (b(m)) = (\tilde{f_0} b)(m-1), \\
        \tilde{e_i} (b(m))  & = (\tilde{e_i} b)(m), \ \tilde{f_i} (b(m)) = (\tilde{f_i} b)(m) \ \ \text{for $i \neq 0$}, \\
        \veps_i (b(m))      & = \veps_i (b), \ \vphi_i (b(m)) = \vphi_i (b) \ \ \text{for $i \in I, m \in \Z$}.
    \end{aligned}
\end{equation}

A \textit{classical energy function} on $\BB$ is a $\Z$-valued function $h: \BB \ot \BB \to \Z$ satisfying
\begin{equation}
    \begin{aligned}
        h(\tilde{e_i} (b_1 \ot b_2))    & = h(b_1 \ot b_2) \ \ \text{if $i \neq 0$}, \\
        h(\tilde{e_0} (b_1 \ot b_2))    & = \begin{cases}
            h(b_1 \ot b_2) + 1  & \text{if $\vphi_0(b_1) \geq \veps_0(b_2)$}, \\
            h(b_1 \ot b_2) - 1  & \text{if $\vphi_0(b_1) < \veps_0(b_2)$}.
        \end{cases}
    \end{aligned}
\end{equation}

We define the \textit{affine energy function} $H: \BBaff \ot \BBaff \to \Z$ associated with $h: \BB \ot \BB \to \Z$ by
\begin{equation}
    H(b_1(m) \ot b_2(n)) = m-n-h(b_1 \ot b_2) \ \ \text{for $b_1, b_2 \in \BB$, $m, n \in \Z$}.
\end{equation}

Note that
\begin{equation} \label{eq:affineE}
    \begin{aligned}
        H(\tilde{e_i}(b_1(m) \ot b_2(n)))   & = H(\tilde{f_i} (b_1(m) \ot b_2(n))) \\
                                            & = H(b_1(m) \ot b_2(n)) \ \ \text{for all $i \in I$}.
    \end{aligned}
\end{equation}
Hence $H$ is constant on each connected component of $\BBaff \ot \BBaff$.

\begin{defn}
    Let $l$ be a positive integer. A finite \Upcry\ $\BB$ is called a \textit{perfect crystal of level $l$} if $\BB$ satisfies the following conditions:
    \begin{enumerate}
        \item there exists a finite dimensional $U_q^{'}(\g)$-module with crystal $\BB$,
        \item $\BB \ot \BB$ is connected,
        \item there is $\lambda \in P_\text{cl}$ such that $\wt(\BB) \subset \lambda - \sum_{i \neq 0} \Z_{\geq 0} ~ \alpha_i$, $\# (\BB_\lambda) = 1$,
        \item for any $b \in \BB$, we have $\langle c, \veps_i(b) \rangle \geq l$,
        \item for each $\lambda \in P_\text{cl}^+$ with $\langle c, \lambda \rangle = l$, there exist uniquely determined elements $b^\lambda$ and $b_\lambda$ such that $\veps(b^\lambda) = \vphi (b_\lambda) = \lambda$.
    \end{enumerate}
\end{defn}

The elements $b^\lambda$ and $b_\lambda$ are called the \textit{minimal vectors}.

For a dominant integral weight $\lambda \in P^+$, we denote by $\BB(\lambda)$ the crsytal of $V(\lambda)$, the irreducible highest weight $U_q(\g)$-module. By the definition of perfect crystals, we obtain a \Upcry\ isomorphism
\begin{equation} \label{eqn: Psi}
    \begin{array}{rccc}
        \Psi_\lambda:   & \BB(\lambda)  & \stackrel{\sim}{\longrightarrow}  & \BB(\veps(b_\lambda)) \ot \BB \\
                        & u_\lambda     & \longmapsto                       & u_{\veps(b_\lambda)} \ot b_\lambda,
    \end{array}
\end{equation}
where $u_\lambda$ denotes the highest weight vector of $\BB(\lambda)$. Applying the isomorphism (\ref{eqn: Psi}) repeatedly, we get a sequence of \Upcry\ isomorphisms
\begin{equation}
    \begin{array}{ccccccl}
        \BB(\lambda) & \stackrel{\sim}{\longrightarrow} & \BB(\lambda_1) \ot \BB & \stackrel{\sim}{\longrightarrow} & \BB(\lambda_2) \ot \BB \ot \BB & \stackrel{\sim}{\longrightarrow} & \cdots \\
        u_\lambda & \longmapsto & u_{\lambda_1} \ot b_0 & \longmapsto & u_{\lambda_2} \ot b_1 \ot b_0 & \longmapsto & \cdots,
    \end{array}
\end{equation}
where
\begin{equation}
    \begin{array}{ll}
        \lambda_0 = \lambda,        & b_0 = b_{\lambda_0} = b_\lambda, \\
        \lambda_{k+1} = \veps(b_k), & b_{k+1} = b_{\lambda_{k+1}} \ \ \text{for $k \geq 0$}.
    \end{array}
\end{equation}
The infinite sequence $\mathbf{b}_\lambda =(b_k)_{k \geq 0} = \cdots \ot b_k \ot \cdots \ot b_1 \ot b_0 \in \BB^{\ot \infty}$ is called the \textit{ground-state path of weight $\lambda$}.

An infinite sequence $\mathbf{p} = (p_k)_{k \geq 0} \in \BB^{\ot \infty}$ is called a \textit{$\lambda$-path} if $p_k = b_k$ for $k \gg 0$.

Let $\PP(\lambda)$ be the set of all $\lambda$-paths. By the tensor
product rule \eqref{eq:tensor}, $\PP(\lambda)$ is given a \Upcry\
structure. It is proved in \cite{KMN1} that $\PP(\lambda)$ provides
a realization of $\BB(\lambda)$ as a \Upcry.

\begin{prop} \cite{KMN1}
    For each $\lambda \in P^+$, there exists a \Upcry\ isomorphism
    \begin{equation}
        \begin{array}{ll}
            \Phi_\lambda: \BB(\lambda) \longrightarrow \PP(\lambda), & u_\lambda \longmapsto \mathbf{b}_\lambda.
        \end{array}
    \end{equation}
\end{prop}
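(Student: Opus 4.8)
The plan is to realize $\Phi_\lambda$ as the stable limit of the iterated isomorphisms $\BB(\lambda) \stackrel{\sim}{\longrightarrow} \BB(\lambda_N) \ot \BB^{\ot N}$ built from $\Psi_\lambda$ in \eqref{eqn: Psi}, and then to verify that this limit is a well-defined \Upcry\ isomorphism onto $\PP(\lambda)$. First I would define the map: given $b \in \BB(\lambda)$, apply $\Psi_{\lambda_k}$ successively, so that after $N$ steps $b$ is sent to $b^{(N)} \ot p_{N-1} \ot \cdots \ot p_0$ with $b^{(N)} \in \BB(\lambda_N)$ and $p_k \in \BB$. The crucial observation is that the factors $p_0, \ldots, p_{N-1}$ are fixed at each stage and are not altered by subsequent applications of $\Psi$; hence the sequence $(p_k)_{k \geq 0}$ is well defined, and I set $\Phi_\lambda(b) = \cdots \ot p_k \ot \cdots \ot p_0$. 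Applied to the highest weight vector, \eqref{eqn: Psi} gives $u_\lambda \mapsto u_{\lambda_N} \ot b_{N-1} \ot \cdots \ot b_0$, so that $\Phi_\lambda(u_\lambda) = \mathbf{b}_\lambda$ as required.

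Next I would show that $\Phi_\lambda(b)$ is genuinely a $\lambda$-path and that the construction stabilizes. Writing $b = \tilde{f}_{i_1} \cdots \tilde{f}_{i_r} u_\lambda$, the tensor product rule \eqref{eq:tensor} shows that each $\tilde{f_i}$ acts on a single tensor factor; since only finitely many operators are involved, for $N \gg 0$ the head $b^{(N)}$ is forced to be the highest weight vector $u_{\lambda_N}$, while the tail factors satisfy $p_k = b_k$. This simultaneously proves that $p_k = b_k$ for $k \gg 0$, i.e. $\Phi_\lambda(b) \in \PP(\lambda)$, and produces the stabilized form $b \mapsto u_{\lambda_N} \ot p_{N-1} \ot \cdots \ot p_0$ that the inverse construction will exploit.

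It then remains to check that $\Phi_\lambda$ is a crystal isomorphism. Because each $\Psi_{\lambda_k}$ commutes with $\tilde{e_i}, \tilde{f_i}$ and preserves $\owt$, $\veps_i$, $\vphi_i$, and because the tensor product rule is local in the sense that it modifies a single factor, the Kashiwara operators and the functions $\owt, \veps_i, \vphi_i$ on $\PP(\lambda)$ — computed via \eqref{eq:tensor} together with the renormalization that absorbs the head weight $\lambda_N$ — agree with those transported from $\BB(\lambda)$. For bijectivity I would exhibit the inverse explicitly: given $\mathbf{p} \in \PP(\lambda)$ with $p_k = b_k$ for $k \geq N$, set $\Phi_\lambda^{-1}(\mathbf{p})$ to be the preimage of $u_{\lambda_N} \ot p_{N-1} \ot \cdots \ot p_0$ under the iterated $\Psi$. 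The identity $\Psi_{\lambda_N}(u_{\lambda_N}) = u_{\lambda_{N+1}} \ot b_N$ guarantees independence of $N$, and the stabilization of the previous step shows this is a two-sided inverse.

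The main obstacle is the stabilization argument of the second step: one must verify, using the perfect crystal axioms — in particular the uniqueness of the minimal vectors $b_\lambda, b^\lambda$ and the level condition — that applying finitely many Kashiwara operators cannot perturb infinitely many tensor factors, so that both the head converges to $u_{\lambda_N}$ and the tail converges to the ground-state path $\mathbf{b}_\lambda$. Controlling this propagation across the tensor factors by a careful reading of \eqref{eq:tensor}, and confirming the consistency of the infinite family of isomorphisms so that the limit is genuinely well defined, is the technical heart of the proof.
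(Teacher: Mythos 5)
The paper itself contains no proof of this proposition: it is imported from \cite{KMN1}, so there is no internal argument to compare yours against. Measured against the argument of that reference, your proposal follows essentially the same route --- iterate the perfect-crystal isomorphism \eqref{eqn: Psi}, observe that tail factors, once extracted, are never revisited by later applications of $\Psi$, pass to the limit, and construct the inverse by truncating a path at a point beyond which it agrees with the ground-state path. Your handling of well-definedness and of the inverse (independence of the truncation point $N$ via $\Psi_{\lambda_N}(u_{\lambda_N}) = u_{\lambda_{N+1}} \ot b_N$) is correct.

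The one step where you gesture rather than argue is the stabilization claim, which you yourself flag as the technical heart, so let me name the mechanism that closes it. ``Finitely many operators modify finitely many factors'' is not by itself enough: one must show the modified factors are confined to the right end, i.e.\ that no $\tilde{f}_i$ ever reaches the head $u_{\lambda_N}$ or the far tail when $N$ is large. The key is the identity $\vphi(b_{k+1}) = \lambda_{k+1} = \veps(b_k)$, which holds by the very construction of the ground-state path from minimal vectors (since $\vphi(b_{\lambda_{k+1}}) = \lambda_{k+1} = \veps(b_k)$), together with $\vphi_i(u_{\lambda_N}) = \langle h_i, \lambda_N \rangle = \veps_i(b_{N-1})$. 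A short induction with the tensor product rule \eqref{eq:tensor}, using $\langle h_i, \wt(b) \rangle = \vphi_i(b) - \veps_i(b)$, shows that $\vphi_i$ of any pure ground-state prefix $u_{\lambda_N} \ot b_{N-1} \ot \cdots \ot b_{k+1}$ equals $\veps_i(b_k)$, which is $\leq \veps_i$ of the remaining suffix; hence if $\tilde{f}_i$ acts on such a prefix at all, it acts on its rightmost factor. Consequently each lowering operator can enlarge the disturbed region by at most one column to the left, so after $r$ operators the image of $b = \tilde{f}_{i_1} \cdots \tilde{f}_{i_r} u_\lambda$ differs from $u_{\lambda_N} \ot b_{N-1} \ot \cdots \ot b_0$ only in positions $< r$. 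This is exactly the confinement your second and fourth paragraphs need (it also justifies that the signature rule gives a well-defined crystal structure on $\PP(\lambda)$, which the paper likewise takes for granted). Note that only the existence and defining property of the minimal vectors enters here; the level condition you invoke plays no direct role. With this lemma inserted, your proof is complete and coincides with the argument of \cite{KMN1}.
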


Note that the \Upcry\ isomorphism (\ref{eqn: Psi}) induces a \Ucry\ embedding
\begin{equation}
    \begin{array}{rcl}
        \Phi_\lambda^{\text{aff}}: \BB(\lambda) & \hookrightarrow & \BB(\veps(b_\lambda)) \ot \BBaff,\\
        u_\lambda & \longmapsto & u_{\veps(b_\lambda)} \ot b_\lambda(m) \ \ \text{for some $m \in \Z$}.
    \end{array}
\end{equation}

Since any affine energy function is constant on each connected
component of $\BBaff \ot \BBaff$, the \Ucry\ $\BB(\lambda)$ is
isomorphic to any connected component of $(\BBaff)^{\ot \infty}$
containing an \textit{affine ground-state path of weight $\lambda$}
\begin{equation}
    \mathbf{b}_\lambda^{\text{aff}}(\mathbf{m}) = (\mathbf{b}_\lambda^{\text{aff}}(m_k))_{k \geq 0} = \cdots \ot b_k(m_k) \ot \cdots \ot b_1(m_1) \ot b_0(m_0)
\end{equation}
for some sequence $\mathbf{m} = (m_k)_{k \geq 0}$ of integers such that
\begin{equation} \label{eqn: integer_m}
    m_k = m_0 + \sum_{j=0}^{k-1} h(b_{j+1} \ot b_j),
\end{equation}
where $\mathbf{b}_\lambda = (b_k)_{k \geq 0}$ is the ground-state path of weight $\lambda$ and $h: \BB \ot \BB \to \Z$ is a classical energy function on $\BB$. Therefore, we obtain a \Ucry\ isomorphism
\begin{equation}
    \BB(\lambda) \stackrel{\sim}{\longrightarrow} \PPaff (\lambda, \mathbf{m}),
\end{equation}
where the \Ucry\ $\PPaff (\lambda, \mathbf{m})$ consists of \textit{affine $\lambda$-paths}
\begin{equation*}
    \mathbf{p} = (p_k (n_k))_{k \geq 0} \in (\BBaff)^{\ot \infty}
\end{equation*}
satisfying the conditions
\begin{enumerate}
    \item $\mathbf{m} = (m_k)_{k \geq 0}$ is a sequence of integers satisfying (\ref{eqn: integer_m}),
    \item $p_k (n_k) = b_k (m_k)$ for $k \gg 0$,
    \item $H (p_{k+1} (n_{k+1}) \ot p_k (n_k)) = H (b_{k+1} (m_{k+1}) \ot b_k (m_k))$ for all $k \geq 0$.
\end{enumerate}

\vskip 3mm

\section{$A_2^{(2)}$-type adjoint crystals}

From now on, we focus on the quantum affine algebra $U_q(A_2^{(2)})$
associated with the affine Cartan matrix $A = \left( \begin{matrix}
2 & -4 \\ -1 & 2 \end{matrix} \right)$. In this case, we have $c =
h_0 + 2h_1$, $\delta = 2 \alpha_0 + \alpha_1$.

Fix a positive integer $l$, and set
\begin{equation}
    \BBad = \set{(x,y) \in \Z_{\geq 0} \times \Z_{\geq 0} ~|~ 0 \leq x+y \leq l}.
\end{equation}
Define a \UApcry\ structure on $\BBad$ by
\begin{equation}
    \begin{aligned}
        \wt (x,y)           & = 2(y-x) \Lambda_0 + (x-y) \Lambda_1, \\
        \veps_1 (x,y)       & = y, \ \vphi_1 (x,y) = x, \\
        \veps_0 (x,y)       & = l - 2y + \abs{x-y}, \\
        \vphi_0 (x,y)       & = l - 2x + \abs{x-y}, \\
        \tilde{e_1} (x,y)   & = (x+1, y-1), \\
        \tilde{f_1} (x,y)   & = (x-1, y+1), \\
        \tilde{e_0} (x,y)   & = \begin{cases}
            (x-1, y)    & \text{if $x>y$}, \\
            (x, y+1)    & \text{if $x \leq y$},
        \end{cases}\\
        \tilde{f_0} (x,y)   & = \begin{cases}
            (x+1, y)    & \text{if $x \geq y$}, \\
            (x, y-1)    & \text{if $x<y$},
        \end{cases}
    \end{aligned}
\end{equation}
whenever $\tilde{e_i}, \tilde{f_i}$ $(i=0,1)$ send non-zero vectors
to non-zero vectors. Otherwise, we define $\tilde{e_i} b =
\tilde{f_i} b = 0$ for $b \in \BBad$. Then it was shown in
\cite{KKM} that $\BBad$ is a $A_2^{(2)}$-type perfect crystal of
level $l$ with minimal vectors
\begin{equation}
    b^\lambda = b_\lambda = (a,a) \ \ \text{for $\lambda = (l-2a) \Lambda_0 + a \Lambda_1$}.
\end{equation}
The \UApcry\ $\BBad$ is called the \textit{$A_2^{(2)}$-type adjoint
crystal of level $l$}.

\begin{ex}
    When $l=4$, the description of $\BBad$ is given below.
    \vpic[1.5em]
    \begin{center}
        \begin{texdraw}
            \drawdim em \setunitscale 0.15 \linewd 0.4 \arrowheadtype t:V \arrowheadsize l:3 w:3
            \htext(0 2){(4,0)} \move(20 6) \ravec(10 0) \htext(36 2){(3,1)} \move(56 6) \ravec(10 0) \htext(72 2){(2,2)} \move(92 6) \ravec(10 0) \htext(108 2){(1,3)} \move(128 6) \ravec(10 0) \htext(144 2){(0,4)}
            \htext(18 22){(3,0)} \move(38 26) \ravec(10 0) \htext(54 22){(2,1)} \move(74 26) \ravec(10 0) \htext(90 22){(1,2)} \move(110 26) \ravec(10 0) \htext(126 22){(0,3)}
            \htext(36 42){(2,0)} \move(56 46) \ravec(10 0) \htext(72 42){(1,1)} \move(92 46) \ravec(10 0) \htext(108 42){(0,2)}
            \htext(54 62){(1,0)} \move(74 66) \ravec(10 0) \htext(90 62){(0,1)}
            \htext(72 82){(0,0)}

            \move(20 20) \ravec(-8 -9) \move(56 20) \ravec(-8 -9) \move(110 11) \ravec(-8 9) \move(146 11) \ravec(-8 9)
            \move(38 40) \ravec(-8 -9) \move(74 40) \ravec(-8 -9) \move(92 31) \ravec(-8 9) \move(128 31) \ravec(-8 9)
            \move(56 60) \ravec(-8 -9) \move(110 51) \ravec(-8 9)
            \move(74 80) \ravec(-8 -9) \move(92 71) \ravec(-8 9)

            \lpatt(0.3 1) \move(79 6) \lellip rx:9 ry:6 \move(79 46) \lellip rx:9 ry:6 \move(79 86) \lellip rx:9 ry:6
        \end{texdraw}
    \end{center}

    The dotted ones are minimal vectors:
    \begin{equation*}
        \begin{aligned}
            & b^{4 \Lambda_0} = b_{4 \Lambda_0} = (0,0), \\
            & b^{2 \Lambda_0 + \Lambda_1} = b_{2 \Lambda_0 + \Lambda_1} = (1,1), \\
            & b^{2 \Lambda_1} = b_{2 \Lambda_1} = (2,2).
        \end{aligned}
    \end{equation*}
\end{ex}

For $\lambda = (l-2a) \Lambda_0 + a \Lambda_1$ $(0 \leq a \leq \lfloor \frac{l}{2} \rfloor)$, by (\ref{eqn: Psi}), we obtain a \UApcry\ isomorphism
\begin{equation}
    \begin{array}{rccc}
        \Psi_\lambda:   & \BB(\lambda)  & \stackrel{\sim}{\longrightarrow}  & \BB(\lambda) \ot \BBad \\
                        & u_\lambda     & \longmapsto                       & u_\lambda \ot (a,a),
    \end{array}
\end{equation}
which yields a path realization
\begin{equation}
    \BB(\lambda) \stackrel{\sim}{\longrightarrow} \PP(\lambda) = \set{\mathbf{p} = (x_k, y_k)_{k \geq 0} \in \BBad^{\ot \infty} ~|~ (x_k, y_k) = (a,a) \ \ \text{for $k \gg 0$}}.
\end{equation}

Define a function $h: \BBad \ot \BBad \to \Z$ by
\begin{equation} \label{eq:h1}
\begin{aligned}
& h((x_1, y_1) \otimes (x_2, y_2)) \\
& = \text{max} \left \{
\begin{aligned}
& (x_1 + y_1) -(x_2 + y_2),
\ (x_2 + y_2) - (x_1 + y_1), \\
& (x_2 + y_2) + (y_1 - 3 x_1), \
 (x_1 + y_1) + (x_2 - 3 y_2)
\end{aligned}
\right \}.
\end{aligned}
\end{equation}

\vskip 3mm

\begin{prop} \cite{KKM}
    The function $h: \BB_\textnormal{ad} \ot \BB_\textnormal{ad} \rightarrow \Z$ given by {\rm (\ref{eq:h1})} is a classical energy function on
    $\BB_\textnormal{ad}$. In particular, $h((a,a) \otimes
    (a,a))=0$.
\end{prop}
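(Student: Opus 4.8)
The plan is to check the two defining properties of a classical energy function directly from the formula, and then to read off the normalization by substitution. Throughout write $b_j = (x_j, y_j)$ and abbreviate the four quantities inside the maximum in (\ref{eq:h1}) as
\[
t_1 = (x_1+y_1)-(x_2+y_2),\quad t_2 = -t_1,\quad t_3 = (x_2+y_2)+(y_1-3x_1),\quad t_4 = (x_1+y_1)+(x_2-3y_2),
\]
so that $h(b_1 \otimes b_2) = \max(t_1,t_2,t_3,t_4)$. The normalization $h((a,a)\otimes(a,a))=0$ is immediate, since each $t_i$ vanishes when $x_1=y_1=x_2=y_2=a$. The content is therefore the invariance of $h$ under $\tilde e_1$ and its $\pm 1$ behaviour under $\tilde e_0$.

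For the $i=1$ identity I would first record that $\tilde e_1$ preserves each partial sum $x_j+y_j$, so $t_1$ and $t_2$ are unaffected and only $t_3$ or $t_4$ can move. The key algebraic fact is the identity $t_3 - t_4 = 4(y_2 - x_1)$, whose sign is governed by exactly the comparison $\vphi_1(b_1)=x_1$ versus $\veps_1(b_2)=y_2$ appearing in the tensor product rule \eqref{eq:tensor}. When $\tilde e_1$ acts on the first factor (the case $x_1 \ge y_2$), the term $t_3$ drops by $4$ while $t_4$ is fixed; since then $t_3 \le t_4$, the maximum is unchanged. When $\tilde e_1$ acts on the second factor (the case $x_1 < y_2$), the term $t_4$ rises by $4$ while $t_3$ is fixed; since then $t_4 + 4 \le t_3$, the maximum is again unchanged. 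This settles $h(\tilde e_1(b_1 \otimes b_2)) = h(b_1\otimes b_2)$.

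The $i=0$ identity is the substance of the proof, and I would organize it by the four sign regions determined by $x_1-y_1$ and $x_2-y_2$. Using the companion identities $t_4 - t_1 = 2(x_2-y_2)$ and $t_3 - t_2 = 2(y_1-x_1)$, the maximum collapses in each region to a maximum of two terms: $\max(t_2,t_4)$ when $x_1\ge y_1,\ x_2\ge y_2$; $\max(t_1,t_2)$ when $x_1\ge y_1,\ x_2\le y_2$; $\max(t_3,t_4)$ when $x_1\le y_1,\ x_2\ge y_2$; and $\max(t_1,t_3)$ when $x_1\le y_1,\ x_2\le y_2$. The $l$-dependence cancels in the tensor-product test $\vphi_0(b_1)\ge\veps_0(b_2)$, which reduces to a sign condition on one of the surviving terms (for instance $t_4 \le 0$ in the $(+,+)$ region, $t_2 \ge 0$ in the $(+,-)$ region, and $t_3 \ge 0$ in the $(-,-)$ region). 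With the correct tensor factor thereby identified, I would compute the effect of the two internal branches of $\tilde e_0$ (namely $x\mapsto x-1$ or $y\mapsto y+1$) on $t_1,\dots,t_4$ and verify that the surviving maximum increases by $1$ exactly when $\vphi_0(b_1)\ge\veps_0(b_2)$ and decreases by $1$ otherwise.

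The main obstacle lies in the boundary subcases, where applying $\tilde e_0$ pushes one factor across the diagonal $x=y$. There the sign region changes, so the pair of terms realizing the maximum before the move differs from the pair realizing it afterwards, and the two active terms no longer shift by the same amount (e.g.\ in a $(-,+)\to(-,-)$ transition $t_3$ increases by $1$ while $t_4$ decreases by $3$). To pin the change to exactly $\pm1$ one must establish sharp auxiliary inequalities — typically that the term being dropped sits at least $2$ below the term that survives — which follow from the sign constraints of the region together with the governing inequality $\vphi_0(b_1)\gtrless\veps_0(b_2)$. For example, when $x_1\le y_1$, $x_2\le y_2$ and $t_3<0$ one checks that $x_2+y_2<2x_1$, forcing $t_1 \ge t_3+2$ and hence a clean decrease by $1$. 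Assembling these region-by-region verifications, including the boundary transitions, completes the proof that (\ref{eq:h1}) is a classical energy function.
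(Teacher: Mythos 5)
Your proposal is correct, but it takes a genuinely different route from the paper, which contains no argument at all: the proposition is simply quoted from \cite{KKM}, where the energy function arises from the general machinery of perfect crystals and combinatorial $R$-matrices. Your verification is elementary and self-contained, and its stated ingredients all check out: the normalization and the whole $i=1$ case are complete as written; the identities $t_3-t_4=4(y_2-x_1)$, $t_4-t_1=2(x_2-y_2)$ and $t_3-t_2=2(y_1-x_1)$ do collapse the maximum to two terms in each sign region; the $l$-free forms of the test $\vphi_0(b_1)\ge \veps_0(b_2)$ are as you say (namely $t_4\le 0$, $t_2\ge 0$, $t_3\ge 0$, together with $t_1+t_3-t_4\ge 0$ in the fourth region $x_1\le y_1$, $x_2\ge y_2$, which you left unstated); and I checked your boundary transitions, including the $x_2=y_2$ exits from the $(+,+)$ and $(-,+)$ regions and your worked $(-,-)$ example, each of which yields exactly $\pm 1$. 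One remark would convert your ad hoc ``sharp auxiliary inequalities'' into a uniform principle: every pairwise difference $t_i-t_j$ is an even integer (indeed $t_3-t_4\in 4\Z$ and the rest lie in $2\Z$), so a strict inequality between two of the four terms always comes with slack at least $2$; this single parity observation settles every boundary subcase, including those you did not write out, and makes finishing the remaining region-and-branch checks routine. As for what each route buys: the citation inherits existence, and uniqueness up to an additive constant, of the energy function on the connected crystal $\BBad\ot\BBad$ from the general theory, while your computation actually certifies that the explicit formula \eqref{eq:h1} --- the formula the paper then relies on in \eqref{eq:h1-a} and in the definition of (reduced) Young walls --- has the required properties, something neither the paper nor the citation displays. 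The one caveat: as written, your $i=0$ discussion is a correct and checkable blueprint rather than a finished proof, but with the parity remark it closes with no surprises.
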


\vskip 3mm

For $\lambda = (l-2a) \Lambda_0 + a \Lambda_1$, let
\begin{equation*}
    \mathbf{b}_\lambda^\text{aff} = (b_k (m_k))_{k \geq 0} = ((a,a) (m_k))_{k \geq 0}
\end{equation*}
be the affine ground-state path with $m_0 = 0$. Then, by (\ref{eqn: integer_m}), $m_k = 0$ for all $k \geq 0$. Hence the \UAcry\ $\PPaff (\lambda)$ consists of affine $\lambda$-paths $\mathbf{p} = ((x_k, y_k) (-m_k))$ with $x_k, y_k, m_k \in \Z_{\geq 0}$ such that
\begin{enumerate}
    \item $(x_k, y_k)(-m_k) = (a,a)(0)$ for $k \gg 0$,
    \item $H((x_{k+1}, y_{k+1})(-m_{k+1}) \ot (x_k, y_k)(-m_k)) = 0$ for all $k \geq 0$.
\end{enumerate}
Note that the condition (ii) is equivalent to

\begin{equation} \label{eq:(ii)}
m_{k} -m_{k+1} = \text{max} \left \{
\begin{aligned}
& (x_{k+1} + y_{k+1})-(x_{k}+y_{k}), \
 (x_{k} + y_{k})-(x_{k+1}+y_{k+1}), \\
& (x_{k}+y_{k}) + (y_{k+1} - 3 x_{k+1}), \
 (x_{k+1} + y_{k+1}) + (x_{k}-3y_{k})
\end{aligned}
\right \}.
\end{equation}

\vskip 5mm

\section{Young wall model for $\BB_\textnormal{ad}$}

\vskip 3mm

In this section, we construct a \Yw\ model associated with $\BBad$.
The colored blocks of the following shapes will be used for this
\Yw\ model.

\vskip 3mm

\vpic
\raisebox{-0.4\height}{
    \begin{texdraw}
        \drawdim em \setunitscale 0.15  \linewd 0.4 \move(0 0)\lvec(10 0)\lvec(10 10)\lvec(0 10)\lvec(0 0) \move(10 0)\lvec(12.5 2.5)\lvec(12.5 12.5)\lvec(2.5 12.5)\lvec(0 10) \move(10 10)\lvec(12.5 12.5)  \htext(3.5 3){0}
    \end{texdraw}} \hskip 1mm , \hskip 1mm
\raisebox{-0.4\height}{
    \begin{texdraw}
        \drawdim em \setunitscale 0.15  \linewd 0.4 \move(0 0)\lvec(10 0)\lvec(10 10)\lvec(0 10)\lvec(0 0) \move(10 0)\lvec(12.5 2.5)\lvec(12.5 12.5)\lvec(2.5 12.5)\lvec(0 10) \move(10 10)\lvec(12.5 12.5)  \htext(3.5 3){1}
    \end{texdraw}}
\hskip 3.8mm : unit width, unit height, half-unit thickness,

\vskip 3mm

\vpic
\raisebox{-0.4\height}{
    \begin{texdraw}
        \drawdim em \setunitscale 0.15  \linewd 0.4 \move(0 0)\lvec(10 0)\lvec(10 5)\lvec(0 5)\lvec(0 0) \move(0 5)\lvec(5 10)\lvec(15 10)\lvec(10 5) \move(15 10)\lvec(15 5)\lvec(10 0)  \htext(4 1){\tiny{0}}
    \end{texdraw}} \hskip 1mm , \hskip 1mm
\raisebox{-0.4\height}{
    \begin{texdraw}
        \drawdim em \setunitscale 0.15  \linewd 0.4 \move(0 0)\lvec(10 0)\lvec(10 5)\lvec(0 5)\lvec(0 0) \move(0 5)\lvec(5 10)\lvec(15 10)\lvec(10 5) \move(15 10)\lvec(15 5)\lvec(10 0)  \htext(4 1){\tiny{1}}
    \end{texdraw}}
\hskip 3.8mm : unit width, half-unit height, unit thickness.
\vpic

\vskip 3mm

We will use the following notations (see, for instance, \cite{HK, Kang03}).
\begin{center}
    \raisebox{-0.4\height}{
        \begin{texdraw}
            \drawdim em \setunitscale 0.15 \linewd 0.4 \move(0 0)\lvec(10 0)\lvec(10 10)\lvec(0 10)\lvec(0 0)\lvec(10 10) \htext(1.5 3.5){*}
        \end{texdraw}
    } $=$ \raisebox{-0.4\height}{
        \begin{texdraw}
            \drawdim em \setunitscale 0.15 \linewd 0.4 \move(0 0)\lvec(10 0)\lvec(10 10)\lvec(0 10)\lvec(0 0) \move(0 10)\lvec(2.5 12.5)\lvec(12.5 12.5) \lvec(10 10) \move(10 0)\lvec(12.5 2.5)\lvec(12.5 12.5) \lpatt(0.3 1) \move(12.5 2.5)\lvec(15 5)\lvec(12.5 5) \htext(3.5 1.5){*}
        \end{texdraw}} \hskip 1mm , \hpic \raisebox{-0.4\height}{
        \begin{texdraw}
            \drawdim em \setunitscale 0.15 \linewd 0.4 \setgray 1 \move(12.5 2.5)\lvec(10 0)\lvec(0 0)\lvec(2.5 2.5) \setgray 0 \move(2.5 2.5)\lvec(12.5 2.5)\lvec(12.5 12.5)\lvec(2.5 12.5)\lvec(2.5 2.5)\lvec(12.5 12.5) \htext(8 2){*}
        \end{texdraw}
    } $=$ \raisebox{-0.4\height}{
        \begin{texdraw}
            \drawdim em \setunitscale 0.15 \linewd 0.4 \move(2.5 2.5)\lvec(12.5 2.5)\lvec(12.5 12.5)\lvec(2.5 12.5)\lvec(2.5 2.5) \move(12.5 2.5)\lvec(15 5)\lvec(15 15)\lvec(5 15)\lvec(2.5 12.5) \move(12.5 12.5)\lvec(15 15) \lpatt(0.3 1) \move(12.5 2.5)\lvec(10 0)\lvec(0 0)\lvec(2.5 2.5) \htext(6 4){*}
        \end{texdraw}} \hskip 1mm .
\end{center}

\vskip 3mm

We now explain how to construct our \Yw\ model for $\BBad$.

\begin{enumerate}
    \item[(1)] The colored blocks should be stacked in the following pattern.
    \begin{center}
        \begin{texdraw}
            \drawdim em \setunitscale 0.15 \linewd 0.4
            \move(0 10) \utriz \reco \reco \utrio \recd \utrio \recz \recz \utriz \recd \utriz \reco \reco \utrio
            \move(0 10) \ltrio \rmove(0 10) \ltriz \rmove(0 10) \ltriz \rmove(0 10) \ltrio \rmove(0 10) \ltrio \rmove(0 10) \ltriz
            \move(0 10) \bsegment \lvec(0 -10) \rmove(10 10) \rlvec(0 -10) \vtext(5, -8){...} \esegment
            \move(0 120) \bsegment \lvec(0 10) \rmove(10 -10) \rlvec(0 10) \vtext(5, 2){...} \esegment
            \move(11 30) \clvec(14 30)(14 33)(14 33) \rlvec(0 10)
            \move(16 45) \clvec(14 45)(14 47)(14 47)
            \move(16 45) \clvec(14 45)(14 43)(14 43)
            \move(11 60) \clvec(14 60)(14 57)(14 57) \rlvec (0 -10)
            \htext(18 43){$l-1$}
            \move(11 70) \clvec(14 70)(14 73)(14 73) \rlvec(0 10)
            \move(16 85) \clvec(14 85)(14 87)(14 87)
            \move(16 85) \clvec(14 85)(14 83)(14 83)
            \move(11 100) \clvec(14 100)(14 97)(14 97) \rlvec (0 -10)
            \htext(18 83){$l-1$}
        \end{texdraw}
    \end{center}
    Here $l$ is the level of $\BBad$.

\vskip 3mm

    \item[(2)] We identify the following columns $(1 \leq k \leq l-1)$, which will play a crucial role in our construction.
        \begin{center}
            \begin{equation} \label{eqn: identification}
                \raisebox{-0.5\height}{
                    \begin{texdraw}
                        \drawdim em \setunitscale 0.15 \linewd 0.4
                        \move(0 10) \utrio \recz \recz \utriz \recd \utriz
                        \move(0 10) \ltriz \rmove(0 10) \ltrio \rmove(0 10) \ltrio \ltrio
                        \move(0 10) \bsegment \lvec(0 -10) \rmove(10 10) \rlvec(0 -10) \vtext(5, -8){...} \esegment
                        \move(11 30) \clvec(14 30)(14 33)(14 33) \rlvec(0 15)
                        \move(16 50) \clvec(14 50)(14 52)(14 52)
                        \move(16 50) \clvec(14 50)(14 48)(14 48)
                        \move(11 70) \clvec(14 70)(14 67)(14 67) \rlvec (0 -15)
                        \htext(18 48){$k$} \htext(26 30){=}
                        \move(40 10) \utriz \reco \reco \utrio \recd \utrio
                        \move(40 10) \ltrio \rmove(0 10) \ltriz \rmove(0 10) \ltriz \ltriz
                        \move(40 10) \bsegment \lvec(0 -10) \rmove(10 10) \rlvec(0 -10) \vtext(5, -8){...} \esegment
                        \move(51 30) \clvec(54 30)(54 33)(54 33) \rlvec(0 15)
                        \move(56 50) \clvec(54 50)(54 52)(54 52)
                        \move(56 50) \clvec(54 50)(54 48)(54 48)
                        \move(51 70) \clvec(54 70)(54 67)(54 67) \rlvec (0 -15)
                        \htext(58 48){$l-k$}
                    \end{texdraw}}
            \end{equation}
        \end{center}
    \item[(3)] No blocks of unit thickness can be stacked on top of blocks of half-unit thickness.
    \item[(4)] We stack 0-blocks one-by-one and we stack two 1-blocks successively taking the identification (\ref{eqn: identification}) into account.
    \item[(5)] Under the identification (\ref{eqn: identification}), we can stack the blocks of half-unit thickness in the front repeatedly, but \textit{not} in the back more than once. Hence a column thus obtained has one of the following forms.
        \begin{center}
            \begin{equation}
                \raisebox{-0.5\height}{
                    \begin{texdraw}
                        \drawdim em \setunitscale 0.15 \linewd 0.4
                        \move(0 10) \utrio \recz \recz \utriz \recd \utriz \utriz \move(0 70) \rlvec(0 10) \vtext(5 72) {...} \move(0 80) \utriz
                        \move(0 10) \ltriz \rmove(0 10) \ltrio \rmove(0 10) \ltrio
                        \move(0 10) \bsegment \lvec(0 -10) \rmove(10 10) \rlvec(0 -10) \vtext(5, -8){...} \esegment
                    \end{texdraw} \hskip 6em
                    \begin{texdraw}
                        \drawdim em \setunitscale 0.15 \linewd 0.4
                        \move(0 10) \utriz \reco \reco \utrio \recd \utrio \utrio \move(0 70) \rlvec(0 10) \vtext(5 72) {...} \move(0 80) \utrio
                        \move(0 10) \ltrio \rmove(0 10) \ltriz \rmove(0 10) \ltriz
                        \move(0 10) \bsegment \lvec(0 -10) \rmove(10 10) \rlvec(0 -10) \vtext(5, -8){...} \esegment
                    \end{texdraw} \hskip 6em
                    \begin{texdraw}
                        \drawdim em \setunitscale 0.15 \linewd 0.4
                        \move(0 10) \utriz \reco \reco \utrio \move(0 40) \rlvec(0 30) \move(10 40) \rlvec(0 30) \vtext(5 52) {...} \move(0 70) \utrio
                        \move(0 10) \ltrio \rmove(0 10) \ltriz \rmove(0 30) \ltriz \ltriz
                        \move(0 10) \bsegment \lvec(0 -10) \rmove(10 10) \rlvec(0 -10) \vtext(5, -8){...} \esegment
                    \end{texdraw} \hskip 6em
                    \begin{texdraw}
                        \drawdim em \setunitscale 0.15 \linewd 0.4
                        \move(0 10) \utrio \recz \recz \utriz \move(0 40) \rlvec(0 30) \move(10 40) \rlvec(0 30) \vtext(5 52) {...} \move(0 70) \utriz
                        \move(0 10) \ltriz \rmove(0 10) \ltrio \rmove(0 30) \ltrio \ltrio
                        \move(0 10) \bsegment \lvec(0 -10) \rmove(10 10) \rlvec(0 -10) \vtext(5, -8){...} \esegment
                    \end{texdraw}}
            \end{equation}
        \end{center}
\end{enumerate}

Let $C$ be a column with one of the above forms. We define
$\tilde{f_0} C$ to be the column obtained by stacking a 0-block on
top of $C$. If there is no place to stack a 0-block, we define
$\tilde{f_0} C = 0$.

On the other hand, we define $\tilde{e_0} C$ to be the column obtained by removing a 0-block from the top of $C$. If there is no 0-block that can be removed, we define $\tilde{e_0} C = 0$.

We now define $\tilde{f_1} C$ to be the column obtained by stacking two 1-blocks successively on top of $C$ taking the identification (\ref{eqn: identification}) into account. If there is no such place, we define $\tilde{f_1} C = 0$.

On the other hand, we define $\tilde{e_1} C$ to be the column obtained by removing two 1-blocks successively from the top of $C$ under the identification (\ref{eqn: identification}). If there is no such place, we define $\tilde{e_1} C = 0$.

Thus, with the operators $\tilde{e_i}, \tilde{f_i}$ $(i=0,1)$, we obtain a \textit{\Yw\ model} for $\BBad$.

\begin{ex}
    The following figure gives our \Yw\ model for the level 4 adjoint crystal $\BBad$.
    \begin{center}
        \begin{texdraw}
            \drawdim em \setunitscale 0.15 \linewd 0.4 \arrowheadtype t:V \arrowheadsize l:3 w:3

            \move(0 10) \recz \recz \utriz \utriz \utriz
            \move(0 10) \bsegment \lvec(0 -10) \rmove(10 10) \rlvec(0 -10) \vtext(5, -8){...} \esegment
            \move(23 25) \ravec(14 0) \htext(28 18){1}

            \move(50 10) \recz \recz \utriz \utriz \utriz
            \move(50 20) \ltrio \ltrio
            \move(50 10) \bsegment \lvec(0 -10) \rmove(10 10) \rlvec(0 -10) \vtext(5, -8){...} \esegment
            \move(73 25) \ravec(14 0) \htext(78 18){1}

            \move(100 10) \utriz \utriz \utriz \reco
            \move(100 10) \ltrio \ltrio \ltrio
            \move(100 10) \bsegment \lvec(0 -10) \rmove(10 10) \rlvec(0 -10) \vtext(5, -8){...} \esegment
            \move(123 25) \ravec(14 0) \htext(128 18){1}

            \move(150 10) \utriz \utriz \reco \reco \utrio
            \move(150 10) \ltrio \ltrio
            \move(150 10) \bsegment \lvec(0 -10) \rmove(10 10) \rlvec(0 -10) \vtext(5, -8){...} \esegment
            \move(173 25) \ravec(14 0) \htext(178 18){1}

            \move(200 10) \reco \reco \utrio \utrio \utrio
            \move(200 10) \bsegment \lvec(0 -10) \rmove(10 10) \rlvec(0 -10) \vtext(5, -8){...} \esegment

            \move(24 67) \ravec(-13 -14) \move(74 67) \ravec(-13 -14) \move(149 53) \ravec(-13 14) \move(199 53) \ravec(-13 14)
            \htext(13 61){0} \htext(63 61){0} \htext(145 61){0} \htext(195 61){0}

            \move(25 80) \utrio \recz \recz \utriz \utriz
            \move(25 80) \ltriz
            \move(25 80) \bsegment \lvec(0 -10) \rmove(10 10) \rlvec(0 -10) \vtext(5, -8){...} \esegment
            \move(48 95) \ravec(14 0) \htext(53 87){1}

            \move(75 80) \utrio \recz \recz \utriz \utriz
            \move(75 80) \ltriz \rmove(0 10) \ltrio \ltrio
            \move(75 80) \bsegment \lvec(0 -10) \rmove(10 10) \rlvec(0 -10) \vtext(5, -8){...} \esegment
            \move(98 95) \ravec(14 0) \htext(103 87){1} \move(105 89) \lcir r:4

            \move(125 80) \utriz \utriz \reco \reco \utrio
            \move(125 80) \ltrio \ltrio \rmove(0 10) \ltriz
            \move(125 80) \bsegment \lvec(0 -10) \rmove(10 10) \rlvec(0 -10) \vtext(5, -8){...} \esegment
            \move(148 95) \ravec(14 0) \htext(153 87){1}

            \move(175 80) \reco \reco \utrio \utrio \utrio
            \move(175 90) \ltriz
            \move(175 80) \bsegment \lvec(0 -10) \rmove(10 10) \rlvec(0 -10) \vtext(5, -8){...} \esegment

            \move(49 137) \ravec(-13 -14) \move(88 137) \ravec(-7 -14) \move(130 123) \ravec(-7 14) \move(174 123) \ravec(-13 14)
            \htext(38 131){0} \htext(79 131){0} \htext(129 131){0} \htext(170 131){0}

            \move(74 217) \ravec(-13 -14) \move(149 203) \ravec(-13 14)
            \htext(63 211){0} \htext(145 211){0}

            \move(50 150) \utrio \utrio \recz \recz \utriz
            \move(50 150) \ltriz \ltriz
            \move(50 150) \bsegment \lvec(0 -10) \rmove(10 10) \rlvec(0 -10) \vtext(5, -8){...} \esegment
            \move(65 170) \ravec(13 0) \htext(70 163){1}

            \move(83 150) \utrio \utrio \recz \recz \utriz
            \move(83 150) \ltriz \ltriz \rmove(0 10) \ltrio \ltrio
            \move(83 150) \bsegment \lvec(0 -10) \rmove(10 10) \rlvec(0 -10) \vtext(5, -8){...} \esegment
            \htext(103 170){=} \htext(98.5 174){(\ref{eqn: identification})}

            \move(117 150) \utriz \utriz \reco \reco \utrio
            \move(117 150) \ltrio \ltrio \rmove(0 10) \ltriz \ltriz
            \move(117 150) \bsegment \lvec(0 -10) \rmove(10 10) \rlvec(0 -10) \vtext(5, -8){...} \esegment
            \move(132 170) \ravec(13 0) \htext(137 163){1}

            \move(150 150) \reco \reco \utrio \utrio \utrio
            \move(150 160) \ltriz \ltriz
            \move(150 150) \bsegment \lvec(0 -10) \rmove(10 10) \rlvec(0 -10) \vtext(5, -8){...} \esegment

            \move(74 217) \ravec(-13 -14) \move(149 203) \ravec(-13 14)
            \htext(63 211){0} \htext(145 211){0}

            \move(75 230) \utrio \utrio \utrio \recz \recz
            \move(75 230) \ltriz \ltriz \ltriz
            \move(75 230) \bsegment \lvec(0 -10) \rmove(10 10) \rlvec(0 -10) \vtext(5, -8){...} \esegment
            \move(98 245) \ravec(14 0) \htext(103 238){1} \move(105 240)\lcir r:4

            \move(125 230) \reco \reco \utrio \utrio \utrio
            \move(125 240) \ltriz \ltriz \ltriz
            \move(125 230) \bsegment \lvec(0 -10) \rmove(10 10) \rlvec(0 -10) \vtext(5, -8){...} \esegment

            \move(99 287) \ravec(-13 -14) \move(124 273) \ravec(-13 14)
            \htext(88 281){0} \htext(120 281){0}

            \move(100 300) \reco \reco \utrio \utrio \utrio \recz
            \move(100 310) \ltrio \ltriz \ltriz
            \move(100 300) \bsegment \lvec(0 -10) \rmove(10 10) \rlvec(0 -10) \vtext(5, -8){...} \esegment
        \end{texdraw}
    \end{center}

\vskip 2mm

    Note that we also use the identification (\ref{eqn: identification}) to draw the circled 1-arrows.

\vskip 3mm

    The minimal vectors are
    \begin{equation*}
        \begin{aligned}
            b^{4 \Lambda_0} = b_{4 \Lambda_0} & = \ \raisebox{-0.8\height}{\begin{texdraw}
                    \drawdim em \setunitscale 0.15 \linewd 0.4
                    \move(0 10) \reco \reco \utrio \utrio \utrio \recz
                    \move(0 20) \ltriz \ltriz \ltriz
                    \move(0 10) \bsegment \lvec(0 -10) \rmove(10 10) \rlvec(0 -10) \vtext(5, -8){...} \esegment
                \end{texdraw}} \\ \\
            b^{2 \Lambda_0 + \Lambda_1} = b_{2 \Lambda_0 + \Lambda_1} & = \ \raisebox{-0.8\height}{\begin{texdraw}
                    \drawdim em \setunitscale 0.15 \linewd 0.4
                    \move(0 10) \utrio \utrio \recz \recz \utriz
                    \move(0 10) \ltriz \ltriz \rmove(0 10) \ltrio \ltrio
                    \move(0 10) \bsegment \lvec(0 -10) \rmove(10 10) \rlvec(0 -10) \vtext(5, -8){...} \esegment
                \end{texdraw}} \ = \ \raisebox{-0.8\height}{\begin{texdraw}
                    \drawdim em \setunitscale 0.15 \linewd 0.4
                    \move(0 10) \utriz \utriz \reco \reco \utrio
                    \move(0 10) \ltrio \ltrio \rmove(0 10) \ltriz \ltriz
                    \move(0 10) \bsegment \lvec(0 -10) \rmove(10 10) \rlvec(0 -10) \vtext(5, -8){...} \esegment
                \end{texdraw}} \\ \\
            b^{2 \Lambda_1} = b_{2 \Lambda_1} & = \ \raisebox{-0.8\height}{\begin{texdraw}
                    \drawdim em \setunitscale 0.15 \linewd 0.4
                    \move(0 10) \recz \recz \utriz \utriz \utriz \reco
                    \move(0 20) \ltrio \ltrio \ltrio
                    \move(0 10) \bsegment \lvec(0 -10) \rmove(10 10) \rlvec(0 -10) \vtext(5, -8){...} \esegment
                \end{texdraw}}
        \end{aligned}
    \end{equation*}
\end{ex}

For a dominant integral weight $\lambda = (l-2a) \Lambda_0 + a \Lambda_1$ $(0 \leq a \leq \lfloor \frac{l}{2} \rfloor )$, the \textit{basic ground-state column of weight $\lambda$} is defined as follows:
\begin{enumerate}
    \item $a=0$: \raisebox{-0.8\height}{\begin{texdraw}
            \drawdim em \setunitscale 0.15 \linewd 0.4
            \move(5 10) \reco \utrio \recd \utrio \recz
            \move(5 15) \ltriz \rmove(0 10) \ltriz
            \move(5 10) \bsegment \lvec(0 -10) \rmove(10 10) \rlvec(0 -10) \vtext(5, -8){...} \esegment
            \move(0 50) \rlvec(20 0)
            \move(16 15) \clvec(19 15)(19 18)(19 18) \rlvec(0 10)
            \move(21 30) \clvec(19 30)(19 32)(19 32)
            \move(21 30) \clvec(19 30)(19 28)(19 28)
            \move(16 45) \clvec(19 45)(19 42)(19 42) \rlvec (0 -10)
            \htext(23 28){$l-1$}
        \end{texdraw}} \vskip 5mm
    \item $0 < a < \lfloor \frac{l}{2} \rfloor$: \raisebox{-0.8\height}{\begin{texdraw}
            \drawdim em \setunitscale 0.15 \linewd 0.4
            \move(5 10) \recz \utriz \recd \utriz
            \move(5 15) \ltrio \rmove(0 10) \ltrio \ltrio
            \move(5 10) \bsegment \lvec(0 -10) \rmove(10 10) \rlvec(0 -10) \vtext(5, -8){...} \esegment
            \move(0 50) \rlvec(20 0)
            \move(16 15) \clvec(19 15)(19 18)(19 18) \rlvec(0 15)
            \move(21 35) \clvec(19 35)(19 37)(19 37)
            \move(21 35) \clvec(19 35)(19 33)(19 33)
            \move(16 55) \clvec(19 55)(19 52)(19 52) \rlvec (0 -15)
            \htext(23 33){$2a$}
        \end{texdraw}} \hpic $=$ \hpic \raisebox{-0.8\height}{\begin{texdraw}
            \drawdim em \setunitscale 0.15 \linewd 0.4
            \move(5 10) \reco \utrio \recd \utrio
            \move(5 15) \ltriz \rmove(0 10) \ltriz \ltriz
            \move(5 10) \bsegment \lvec(0 -10) \rmove(10 10) \rlvec(0 -10) \vtext(5, -8){...} \esegment
            \move(0 50) \rlvec(20 0)
            \move(16 15) \clvec(19 15)(19 18)(19 18) \rlvec(0 15)
            \move(21 35) \clvec(19 35)(19 37)(19 37)
            \move(21 35) \clvec(19 35)(19 33)(19 33)
            \move(16 55) \clvec(19 55)(19 52)(19 52) \rlvec (0 -15)
            \htext(23 33){$l-2a$}
        \end{texdraw}} \vskip 5mm
    \item $a = \frac{l}{2}$, $l$ even: \raisebox{-0.8\height}{\begin{texdraw}
            \drawdim em \setunitscale 0.15 \linewd 0.4
            \move(5 10) \recz \utriz \recd \utriz \reco
            \move(5 15) \ltrio \rmove(0 10) \ltrio
            \move(5 10) \bsegment \lvec(0 -10) \rmove(10 10) \rlvec(0 -10) \vtext(5, -8){...} \esegment
            \move(0 50) \rlvec(20 0)
            \move(16 15) \clvec(19 15)(19 18)(19 18) \rlvec(0 10)
            \move(21 30) \clvec(19 30)(19 32)(19 32)
            \move(21 30) \clvec(19 30)(19 28)(19 28)
            \move(16 45) \clvec(19 45)(19 42)(19 42) \rlvec (0 -10)
            \htext(23 28){$l-1$}
        \end{texdraw}} \vskip 5mm
\end{enumerate}
The horizontal lines in the above figure are called the \textit{lines of height $0$}, where we begin to stack blocks.

The following sets of blocks, consisting of two 0-blocks and two 1-blocks, are called the \textit{$\delta$-blocks}.

\begin{center}
    \raisebox{-0.4\height}{\begin{texdraw}
        \drawdim em \setunitscale 0.15 \linewd 0.4
        \move(0 0) \recz \utriz \rmove(0 -10) \ltrio \ltrio
    \end{texdraw}} \hskip 1mm, \hskip 4mm \raisebox{-0.4\height}{\begin{texdraw}
        \drawdim em \setunitscale 0.15 \linewd 0.4
        \move(0 0) \reco \utrio \rmove(0 -10) \ltriz \ltriz
    \end{texdraw}} \hskip 1mm, \hskip 4mm \raisebox{-0.4\height}{\begin{texdraw}
        \drawdim em \setunitscale 0.15 \linewd 0.4
        \move(0 0) \utrio \utrio \rmove(0 -20) \ltriz \ltriz
    \end{texdraw}} \hskip 1mm, \hskip 4mm \raisebox{-0.4\height}{\begin{texdraw}
        \drawdim em \setunitscale 0.15 \linewd 0.4
        \move(0 0) \utriz \utriz \rmove(0 -20) \ltrio \ltrio
    \end{texdraw}} \hskip 1mm, \hskip 4mm \raisebox{-0.4\height}{\begin{texdraw}
        \drawdim em \setunitscale 0.15 \linewd 0.4
        \move(0 0) \utriz \utriz \rmove(0 -10) \ltrio \ltrio
    \end{texdraw}} \hskip 1mm, \hskip 4mm \raisebox{-0.4\height}{\begin{texdraw}
        \drawdim em \setunitscale 0.15 \linewd 0.4
        \move(0 0) \utrio \utrio \rmove(0 -10) \ltriz \ltriz
    \end{texdraw}}
\end{center} \vskip 3mm

A column of our \Yw\ model is called a \textit{ground-state column of weight $\lambda$} if it can be obtained from the basic ground-state column of weight $\lambda$ by adding or removing some $\delta$-blocks.

When $\lambda = 2 \Lambda_0 + \Lambda_1$, the basic ground-state column and other ground-state columns are given below.

\begin{center}
    \begin{texdraw}
        \drawdim em \setunitscale 0.15 \linewd 0.4 \arrowheadtype t:V \arrowheadsize l:3 w:3
        \move(0 35) \recz \utriz
        \move(0 40) \ltrio \ltrio
        \move(0 35) \bsegment \lvec(0 -10) \rmove(10 10) \rlvec(0 -10) \vtext(5, -8){...} \esegment

        \move(50 10) \utrio \utrio \recz
        \move(60 10) \rlvec(0 10) \rmove(-10 0) \ltriz
        \move(50 10) \bsegment \lvec(0 -10) \rmove(10 10) \rlvec(0 -10) \vtext(5, -8){...} \esegment

        \move(50 60) \utrio \utriz \reco
        \move(60 60) \rlvec(0 10) \rmove(-10 0) \ltrio
        \move(50 60) \bsegment \lvec(0 -10) \rmove(10 10) \rlvec(0 -10) \vtext(5, -8){...} \esegment

        \move(100 10) \reco \utrio
        \move(100 15) \ltriz \ltriz
        \move(100 10) \bsegment \lvec(0 -10) \rmove(10 10) \rlvec(0 -10) \vtext(5, -8){...} \esegment

        \move(100 60) \reco \utrio
        \move(100 65) \ltriz \ltriz
        \move(100 60) \bsegment \lvec(0 -10) \rmove(10 10) \rlvec(0 -10) \vtext(5, -8){...} \esegment

        \move(23 30) \ravec(14 -10) \move(23 50) \ravec(14 10) \move(73 20) \ravec(14 0) \move(73 60) \ravec(14 0)
        \htext(26 62){$+\delta$} \htext(26 13){$-\delta$} \htext(76 63){$+\delta$} \htext(76 12){$-\delta$}
    \end{texdraw}
\end{center}

Let $C$ be a column of our level $l$ \Yw\ model for $\BBad$ and let
$G$ be the column consisting of blocks below the line of height 0.
Then $G$ is a ground-state column. Let $Z$ denote the topmost basic
ground-state column of weight $l \Lambda_0$ contained in $C$. We
define
\begin{equation*}
    \begin{aligned}
        s = s(C)                & = \text{the number of 0-blocks in $C$ above $G$}, \\
        t = t(C)                & = \text{the number of 1-blocks in $C$ above $G$}, \\
        \bar{s} = \bar{s} (C)   & = \text{the number of 0-blocks in $C$ above $Z$}, \\
        \bar{t} = \bar{t} (C)   & = \text{the number of 1-blocks in $C$ above $Z$}.
    \end{aligned}
\end{equation*}

Note that $s - \bar{s} =t - \bar{t}$ and that $t$ and $\bar{t}$ are
even. (Thus $s - \bar{s}$ is also even.) When $s=t$, we have
$\bar{s} = \bar{t}$, but they may have two different values under
the identification (\ref{eqn: identification}). In this case, we
take the smaller value for $\bar{s} = \bar{t}$.

\begin{ex}
    Let $\lambda = 2 \Lambda_0 + \Lambda_1$. \vskip 3mm

    (a) Let $C =$ \raisebox{-0.8\height}{\begin{texdraw}
            \drawdim em \setunitscale 0.15 \linewd 0.4
            \move(5 10) \recz \recz \utriz \utriz \utriz \reco \reco \utrio \utrio \utrio
            \move(5 20) \ltrio \ltrio \ltrio \rmove(0 10) \ltriz
            \move(5 10) \bsegment \lvec(0 -10) \rmove(10 10) \rlvec(0 -10) \vtext(5, -8){...} \esegment
            \move(0 15) \rlvec(20 0) \htext(22 13){$Z$}
            \move(0 35) \rlvec(20 0) \htext(22 33){$G$}
        \end{texdraw}} \hpic . \vskip 3mm
    Then $s=3$, $t=6$, $\bar{s} = 5$, $\bar{t} = 8$. \vskip 3mm

    (b) Let $C =$ \raisebox{-0.8\height}{\begin{texdraw}
            \drawdim em \setunitscale 0.15 \linewd 0.4
            \move(5 10) \reco \reco \utrio \utrio \utrio \recz \recz \utriz \utriz
            \move(5 20) \ltriz \ltriz \ltriz
            \move(5 10) \bsegment \lvec(0 -10) \rmove(10 10) \rlvec(0 -10) \vtext(5, -8){...} \esegment
            \move(0 35) \rlvec(20 0) \htext(22 33){$G$}
            \move(0 55) \rlvec(20 0) \htext(22 53){$Z$}
        \end{texdraw}} \hpic . \vskip 3mm

    Then $s=5$, $t=2$, $\bar{s} = 3$, $\bar{t} = 0$. \vskip 3mm

    (c) Let $C =$ \raisebox{-0.8\height}{\begin{texdraw}
            \drawdim em \setunitscale 0.15 \linewd 0.4
            \move(15 10) \utrio \recz \recz \utriz \utriz \utriz \reco \reco \utrio
            \move(15 10) \ltriz \rmove(0 10) \ltrio \ltrio \ltrio \rmove(0 10) \ltriz \ltriz
            \move(15 10) \bsegment \lvec(0 -10) \rmove(10 10) \rlvec(0 -10) \vtext(5, -8){...} \esegment
            \move(10 25) \rlvec(20 0) \htext(3 23){$Z$}
            \move(10 45) \rlvec(20 0) \htext(3 43){$G$}
        \end{texdraw}} \hskip 3mm $=$ \hskip 3mm \raisebox{-0.8\height}{\begin{texdraw}
            \drawdim em \setunitscale 0.15 \linewd 0.4
            \move(5 10) \utriz \reco \reco \utrio \utrio \utrio \recz \recz \utriz
            \move(5 10) \ltrio \rmove(0 10) \ltriz \ltriz \ltriz \rmove(0 10) \ltrio \ltrio
            \move(5 10) \bsegment \lvec(0 -10) \rmove(10 10) \rlvec(0 -10) \vtext(5, -8){...} \esegment
            \move(0 45) \rlvec(20 0) \htext(22 43){$G$}
            \move(0 65) \rlvec(20 0) \htext(22 63){$Z$}
        \end{texdraw}} \hpic . \vskip 3mm

    Then $s=4$, $t=4$. But under the identification (\ref{eqn: identification}), we have two possibilities: $\bar{s} = \bar{t} = 6$ or $\bar{s} = \bar{t} = 2$. In this case, we take the smaller values $\bar{s} = \bar{t} = 2$.
\end{ex}

Let $C$ be a column of our \Yw\ model for $\BBad$. Then $C$ is
uniquely determined by $s$, $t$, $\bar{s}$ and $\bar{t}$. Since $s -
\bar{s} = t - \bar{t}$, we have only to use $s$, $\bar{s}$ and
$\bar{t}$. Hence we will write
\begin{equation*}
    C = \langle s, t, \bar{s}, \bar{t} \rangle = \langle s, \bar{s}, \bar{t} \rangle.
\end{equation*}
Let $\C$ be the set of all columns of our \Yw\ model for the level
$l$ adjoint crystal $\BBad$ and let
\begin{equation*}
    \BBaff_{\leq 0} = \set{(x,y)(-m) ~|~ x, y, m \in \Z_{\geq 0}}.
\end{equation*}

\vskip 3mm

\begin{prop} \label{prop:Baffine}
{\rm  There is a $U_{q}(A_{2}^{(2)})$-crystal isomorphism $\Psi:\C
\overset{\sim} \rightarrow \BBaff_{\leq 0}$ defined by
\begin{equation} \label{eq:Baffine}
        \Psi (\langle s, \bar{s}, \bar{t} \rangle)  = \begin{cases}
            \left( \bar{s} - \frac{1}{2} \bar{t}, \frac{1}{2} \bar{t} \right) (-s)  & \text{if $\bar{s} \geq \bar{t}$}, \\
            \left( l - \frac{1}{2} \bar{t}, l - \bar{s} + \frac{1}{2} \bar{t} \right) (-s)  & \text{if $\bar{s} <
            \bar{t}$}.
        \end{cases}
\end{equation}

The inverse homomorphism is given by

\begin{equation}
\Phi ((x,y)(-m)) =
\begin{cases}
            \langle m, x+y, 2y \rangle              & \text{if $x \geq y$}, \\
            \langle m, 2l - (x+y), 2(l-x) \rangle   & \text{if $x<y$}.
        \end{cases}
\end{equation}
}
\end{prop}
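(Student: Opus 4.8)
The plan is to prove the two halves of the statement separately: first that $\Psi$ is a well-defined bijection with inverse $\Phi$, and then that $\Psi$ intertwines the Kashiwara operators and preserves $\wt$, $\veps_i$, $\vphi_i$.

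For the bijection, I would first pin down the range of the triples $\langle s, \bar s, \bar t\rangle$ arising from genuine columns. Using the defining relation $s-\bar s = t-\bar t$, the evenness of $t$ and $\bar t$, the stacking rules (3)--(5), and the identification (\ref{eqn: identification}) together with the convention of taking the smaller value of $\bar s=\bar t$ when $s=t$, one shows $s\in\Z_{\geq 0}$, $\bar t\in 2\Z_{\geq 0}$, and that the two branches of $\Psi$ land in $\BBad$: on the branch $\bar s\geq\bar t$ one has $x+y=\bar s\leq l$, while on the branch $\bar s<\bar t$ one has $x+y=2l-\bar s\leq l$. Note also that $\bar s\geq\bar t$ corresponds exactly to $x\geq y$ and $\bar s<\bar t$ to $x<y$, so the case splits of $\Psi$ and $\Phi$ are compatible. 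Granting the ranges, $\Phi\circ\Psi=\mathrm{id}$ and $\Psi\circ\Phi=\mathrm{id}$ follow by substituting the relevant branch and simplifying; this is routine once the inequalities are in hand.

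For the crystal structure, I would read off the operators in the coordinates $\langle s,\bar s,\bar t\rangle$. Away from the boundary, stacking or removing a single $0$-block sends $\langle s,\bar s,\bar t\rangle$ to $\langle s\pm 1,\bar s\pm 1,\bar t\rangle$, and stacking or removing two $1$-blocks sends it to $\langle s,\bar s,\bar t\pm 2\rangle$. On the target I would use the affinization rules together with the explicit formulas on $\BBad$: $\tilde f_0$ shifts $m\mapsto m+1$ while moving $(x,y)$, $\tilde f_1$ fixes $m$ and sends $(x,y)\mapsto(x-1,y+1)$, and dually for $\tilde e_0,\tilde e_1$. Comparing branch by branch under $\Psi$ (using $m=s$, which holds because $1$-moves add no $0$-blocks) then yields $\Psi\tilde e_i=\tilde e_i\Psi$ and $\Psi\tilde f_i=\tilde f_i\Psi$; simultaneously one checks that an operator annihilates a column exactly when the corresponding affine operator leaves $\BBaff_{\leq 0}$, i.e.\ when $(x,y)$ exits the region $0\leq x+y\leq l$. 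Since $\veps_1,\vphi_1,\veps_0,\vphi_0$ and $\wt$ are given by explicit piecewise-linear expressions in $(x,y)$ and $m$, transporting them through $\Psi$ and matching with the block counts is then immediate.

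The hard part will be the interface $\bar s=\bar t$, equivalently $x=y$, where the two branches of $\Psi$ meet and where the topmost basic ground-state column $Z$ jumps under the identification (\ref{eqn: identification}). There the naive rules $\bar s\mapsto\bar s\pm 1$ and $\bar t\mapsto\bar t\pm 2$ fail: adding or deleting blocks completes (or destroys) a fresh copy of the basic ground-state column of weight $l\Lambda_0$, so $Z$ moves and $(\bar s,\bar t)$ must be recomputed via the ``smaller value'' convention, while at the same time the operators $\tilde e_0,\tilde f_0$ on $\BBad$ themselves switch between the clauses $x\leq y$ and $x>y$. I would treat this boundary as a separate case, verifying directly that the recomputed triple is precisely the one whose $\Psi$-image equals the affine-side result, and that the two ``$=0$'' conventions agree there. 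This compatibility at $x=y$ is the crux of the argument; the remaining cases are bookkeeping.
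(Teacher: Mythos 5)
Your proposal is correct and follows essentially the same route as the paper's proof: a direct check that $\Psi$ and $\Phi$ are mutually inverse (after pinning down the admissible ranges of $\langle s,\bar s,\bar t\rangle$), followed by a branch-by-branch verification that they intertwine $\tilde{e}_i,\tilde{f}_i$ whenever all maps involved send non-zero vectors to non-zero vectors. If anything, you are more thorough than the paper, whose proof declares both steps ``easy''/``straightforward'' and records only the sample computation of $\tilde{f}_0$ in the branch $\bar s \geq \bar t$; your isolation of the branch-crossing situation near $\bar s=\bar t$ --- where the naive updates $\bar s \mapsto \bar s \pm 1$, $\bar t \mapsto \bar t \pm 2$ fail and $(\bar s,\bar t)$ must be recomputed as $(2l-\bar s,\,2l-\bar t)$ via the column identification and the ``smaller value'' convention --- is exactly the delicate point that the paper's one displayed example does not touch.
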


\begin{proof}
It is easy to see that $\Psi$ and $\Phi$ are inverses to each other.
Moreover, it is straightforward to verify that they commute with
$\tilde{e}_i$ and $\tilde{f}_i$ $(i=0,1)$  whenever all the maps
involved send non-zero vectors to non-zero vectors. For instance, if
$\bar{s} \ge \bar{t}$, we have
$$\Psi(\tilde{f}_{0}(\langle s, \bar{s}, \bar{t} \rangle)) =
\Psi(\langle s+1, \bar{s}+1, \bar{t} \rangle) = \left(\bar{s}+1 -
\frac{1}{2} \bar{t}, \frac{1}{2} \bar{t}\right)(-s-1),$$ whereas
$$\tilde{f}_{0} \Psi(\langle s, \bar{s}, \bar{t} \rangle) =
\tilde{f}_{0} \left(\left(\bar{s}-\frac{1}{2} \bar{t}, \frac{1}{2}
\bar{t}\right)(-s)\right) =\left(\bar{s}-\frac{1}{2} \bar{t} + 1,
\frac{1}{2} \bar{t}\right)(-s-1),$$ as expected.
\end{proof}

Let $C = \langle s, t, \bar{s}, \bar{t} \rangle$ and $C^{'} =
\langle s', t', \bar{s'}, \bar{t'} \rangle$ be columns of our level
$l$ \Yw\ model. Using the isomorphism \eqref{eq:Baffine}, we may
rewrite the affine energy function $H$ on $\BBaff_{\leq 0}$ as
\begin{equation} \label{eq:H}
H(\Psi(C) \otimes \Psi(C')) = -s + s' -h(C,C'), \end{equation}
where
\begin{equation} \label{eq:h1-a}
h(C,C')=\begin{cases}
\text{max}\left(\begin{aligned}& \bar{s}-\bar{s'},
\bar{s}+\bar{s'}-2 \bar{t'}, \\ & \bar{s'}-\bar{s}, \bar{s'}-3
\bar{s} + 2 \bar{t}
\end{aligned}\right) & \  \text{if} \ \
\bar{s} \ge \bar{t}, \ \bar{s'} \ge \bar{t'}, \\
\text{max} \left(\begin{aligned}& \bar{s}+\bar{s'}-2l, 2l- 3
\bar{s}- \bar{s'} + 2 \bar{t},  \\ &  2l-\bar{s}-\bar{s'},
  -2l + \bar{s}+3 \bar{s'} - 2 \bar{t'}\end{aligned}\right) & \
\text{if} \ \ \bar{s} \ge \bar{t}, \ \bar{s'} < \bar{t'},
\\
\text{max} \left(\begin{aligned} & 2l - \bar{s} - \bar{s'}, -2l -
\bar{s} +
\bar{s'} + 2 \bar{t}, \\
& -2l + \bar{s} + \bar{s'}, 2l - \bar{s} + \bar{s'} -2
\bar{t'}\end{aligned} \right) &   \ \text{if} \ \bar{s}< \bar{t},
\bar{s'} \ge \bar{t'}, \\
\text{max} \left(\begin{aligned} & \bar{s} - \bar{s'}, - \bar{s} -
\bar{s'} + 2 \bar{t}, \\ & \bar{s'} - \bar{s}, -\bar{s} + 3 \bar{s'}
-2 \bar{t'} \end{aligned}\right) & \  \text{if} \ \bar{s} < \bar{t},
\ \bar{s'} < \bar{t'}.
\end{cases}
\end{equation}

We will often write $H(C \ot C^{'})$ for $H(\Psi(C) \otimes
\psi(C'))$.

\vskip 3mm

\begin{defn}
    Let $Y = (Y_k)_{k \geq 0}$ be an infinite sequence of columns of our \Yw\ model.

    (a) The sequence $G = (G_k)_{k \geq 0}$ consisting of the blocks below the line of height 0 in $Y$ is called the \textit{ground-state wall} of $Y$.

    (b) $G$ is called the \textit{basic ground-state wall} of $Y$ if $G_k$ is the basic ground-state column for all $k \geq 0$.
\end{defn}

\vskip 3mm

\begin{defn}
   (a)  An infinite sequence $Y = (Y_k)_{k \geq 0}$ of columns of our \Yw\ model is called a {\it Young wall on $\lambda \in P^+$} if
    \begin{enumerate}
       \item $Y_k$ is the basic ground-state column of weight $\lambda$ for $k \gg 0$.
       \item $H(Y_{k+1} \otimes Y_{k}) \ge 0$ for all $k \ge 0$.
    \end{enumerate}

    (b) A Young wall $Y=(Y_{k})_{k \ge 0}$ is called {\it reduced}
    if $H(Y_{k+1} \otimes Y_{k})=0$ for all $k \ge 0$.
\end{defn}

\begin{ex}
    To determine whether an infinite sequence of columns is a (reduced) \Yw\ or not,
    we have only to check the conditions for adjacent columns.
    We will give several examples of the tensor product of two columns when $\lambda = 2 \Lambda_0 + \Lambda_1$.
    The shaded part indicates the basic ground-state column.

    (a) Let $C \ot C^{'} =$ \hpic \raisebox{-0.8\height}{\begin{texdraw}
            \drawdim em \setunitscale 0.15 \linewd 0.4
            \move(0 25) \rlvec(0 25) \rlvec(10 0) \rlvec(0 -5) \rlvec(10 10) \rlvec(0 -25) \rlvec(-10 0) \rlvec(0 5) \rlvec(-10 -10) \lfill f:0.8
            \move(0 10) \recz \utriz \utriz \utriz \reco \reco \utrio
            \move(0 15) \ltrio \ltrio \ltrio
            \move(10 30) \recz \utriz \utriz \utriz \reco
            \move(10 35) \ltrio \ltrio \ltrio
            \move(0 10) \bsegment \lvec(0 -10) \rmove(10 10) \rlvec(0 -10) \vtext(5, -8){...} \esegment
            \move(10 30) \bsegment \lvec(0 -10) \rmove(10 10) \rlvec(0 -10) \vtext(5, -8){...} \esegment
        \end{texdraw}} \ . \vpic
    Then $s=0$, $t=2$, $\bar{s} = 4$, $\bar{t} = 6$ and $s'=2$, $t'=2$, $\bar{s'} = 4$, $\bar{t'} = 4$.
    By \eqref{eq:H} and     \eqref{eq:h1-a}, we have $H(C \ot C^{'}) = -2$. Hence $C \ot
    C^{'}$ does {\it not} satisfy the conditions for Young walls. \vpic

    (b) Let $C \ot C^{'} =$ \hpic \raisebox{-0.8\height}{\begin{texdraw}
            \drawdim em \setunitscale 0.15 \linewd 0.4
            \move(0 25) \rlvec(0 25) \rlvec(10 0) \rlvec(0 -5) \rlvec(10 10) \rlvec(0 -25) \rlvec(-10 0) \rlvec(0 5) \rlvec(-10 -10) \lfill f:0.8
            \move(0 10) \recz \utriz \utriz \utriz \reco \reco \utrio
            \move(0 15) \ltrio \ltrio \ltrio
            \move(10 30) \recz \utriz \utriz \utriz \reco \reco \utrio
            \move(10 35) \ltrio \ltrio \ltrio \rmove(0 10) \ltriz
            \move(0 10) \bsegment \lvec(0 -10) \rmove(10 10) \rlvec(0 -10) \vtext(5, -8){...} \esegment
            \move(10 30) \bsegment \lvec(0 -10) \rmove(10 10) \rlvec(0 -10) \vtext(5, -8){...} \esegment
        \end{texdraw}} \ . \vpic
    Then $s=0$, $t=2$, $\bar{s} = 4$, $\bar{t} = 6$ and $s'=3$, $t'=4$, $\bar{s'} = 5$, $\bar{t'} = 6$.
    By \eqref{eq:H} and \eqref{eq:h1-a}, we have $H(C \ot C^{'})=0$ and hence
 $C \ot C^{'}$ satisfies the conditions for \textit{reduced} Young walls. \vpic

    (c) Let $C \ot C^{'} =$ \hpic \raisebox{-0.8\height}{\begin{texdraw}
            \drawdim em \setunitscale 0.15 \linewd 0.4
            \move(0 25) \rlvec(0 25) \rlvec(10 0) \rlvec(0 -5) \rlvec(10 10) \rlvec(0 -25) \rlvec(-10 0) \rlvec(0 5) \rlvec(-10 -10) \lfill f:0.8
            \move(0 10) \recz \utriz \utriz \utriz \reco \reco \utrio
            \move(0 15) \ltrio \ltrio \ltrio
            \move(10 30) \recz \utriz \utriz \utriz \reco \reco \utrio \utrio \utrio
            \move(10 35) \ltrio \ltrio \ltrio \rmove(0 10) \ltriz \ltriz
            \move(0 10) \bsegment \lvec(0 -10) \rmove(10 10) \rlvec(0 -10) \vtext(5, -8){...} \esegment
            \move(10 30) \bsegment \lvec(0 -10) \rmove(10 10) \rlvec(0 -10) \vtext(5, -8){...} \esegment
        \end{texdraw}} \ . \vpic
    Then $s=0$, $t=2$, $\bar{s} = 4$, $\bar{t} = 6$ and $s'=4$, $t'=6$, $\bar{s'} = 6$, $\bar{t'} = 8$.
    In this case, we have $H(C \ot C^{'}) =2$ and hence $C \ot C^{'}$ satisfies the conditions for
    (\textit{non-reduced}) Young walls.  \vpic

    (d) Let $C \ot C^{'} =$ \hpic \raisebox{-0.8\height}{\begin{texdraw}
            \drawdim em \setunitscale 0.15 \linewd 0.4
            \move(0 25) \rlvec(0 25) \rlvec(10 0) \rlvec(0 -5) \rlvec(10 10) \rlvec(0 -25) \rlvec(-10 0) \rlvec(0 5) \rlvec(-10 -10) \lfill f:0.8
            \move(0 10) \recz \utriz \utriz \utriz \reco \reco \utrio
            \move(0 15) \ltrio \ltrio \ltrio
            \move(10 30) \reco \utrio \utrio \utrio \recz \recz \utriz \utriz
            \move(10 35) \ltriz \ltriz \ltriz \rmove(0 10) \ltrio \ltrio
            \move(0 10) \bsegment \lvec(0 -10) \rmove(10 10) \rlvec(0 -10) \vtext(5, -8){...} \esegment
            \move(10 30) \bsegment \lvec(0 -10) \rmove(10 10) \rlvec(0 -10) \vtext(5, -8){...} \esegment
        \end{texdraw}} \ . \vpic
    Then $s=0$, $t=2$, $\bar{s} = 4$, $\bar{t} = 6$ and $s'=6$, $t'=4$, $\bar{s'} = 4$, $\bar{t'} = 2$.
    In this case, $H(C \ot C^{'})=2$ and hence $C \ot C^{'}$ satisfies the conditions for
    ({\it non-reduced}) Young walls.
\end{ex}

\vskip 5mm

\section{Crystal structure} \label{Sec: Crystal structure}

\vskip 2mm

Let $\Y(\lambda)$ and $\R(\lambda)$ be the set of \Yws\ and reduced
\Yws, respectively. In this section, we describe the combinatorics
of \Yws\ which would provide crystal structures on $\Y(\lambda)$ and
$\R(\lambda)$. To this end, we first define the action of Kashiwara
operators $\tilde{E_i}, \tilde{F_i}$ $(i=0,1)$.

\begin{defn}
    Let $Y = (Y_k)_{k \geq 0}$ be a \Yw.

    (a) A 0-block in $Y$ is called \textit{removable} if it can be removed from $Y$ to get a \Yw.

    (b) A slot in $Y$ is called \textit{$0$-admissible} if one can add a 0-block on $Y$ to obtain a \Yw.

    (c) A pair of 1-blocks in a column of $Y$ is called \textit{removable}
    if these two blocks can be removed successively from $Y$, under the identification (\ref{eqn: identification}), to obtain a \Yw.

    (d) A pair of 1-slots in a column of $Y$ is called \textit{$1$-admissible} if one can add two 1-blocks successively on these slots, under the identification (\ref{eqn: identification}), to obtain a \Yw.
\end{defn}

To each column $Y_k$ of $Y = (Y_k)_{k \geq 0}$, we assign a sequence
of $-$'s and $+$'s from left to right with as many $-$'s as the
number of removable 0-blocks (resp. removable 1-pairs) followed by
as many $+$'s as the number of 0-admissible slots (resp.
1-admissible pairs). This sequence is called the
\textit{$0$-signature} (resp. \textit{$1$-signature}) of $Y_k$. Thus
we get an infinite sequence of $i$-signatures $(i=0,1)$. Cancel out
all $(+,-)$-pairs in this sequence. Then we obtain a finite sequence
of $-$'s followed by $+$'s, which is called the
\textit{$i$-signature} of $Y$ $(i=0,1)$.

We define $\tilde{E_0} Y$ (resp. $\tilde{E_1} Y$) to be the \Yw\ obtained by removing a 0-block (resp. a pair of 1-blocks) from the column of $Y$ that corresponds to the rightmost $-$ in the 0-signature (resp. 1-signature) of $Y$. If there is no $-$ in the $i$-signature of $Y$, we define $\tilde{E_i} Y = 0$ $(i=0,1)$.

On the other hand, we define $\tilde{F_0} Y$ (resp. $\tilde{F_1} Y$) to be the \Yw\ obtained by adding a 0-block (resp. a pair of 1-blocks) to the column of $Y$ that corresponds to the leftmost $+$ in the 0-signature (resp. 1-signature). If there is no $+$ in the $i$-signature of $Y$, we define $\tilde{F_i} Y = 0$ $(i=0,1)$.

The operators $\tilde{E_i}$, $\tilde{F_i}$ $(i=0,1)$ are called the \textit{Kashiwara operators} on \Yws.

\begin{ex} \label{ex:YW}
    Let $\lambda = 2 \Lambda_0 + \Lambda_1$ and let
    \begin{center}
        \begin{equation*}
            \begin{aligned}
                Y & = \hpic \raisebox{-0.5\height}{\begin{texdraw}
                    \drawdim em \setunitscale 0.15 \linewd 0.4
                    \move(20 15) \rlvec(0 15) \rlvec(10 10) \rlvec(0 -10) \rlvec(10 10) \rlvec(0 -10) \rlvec(10 10) \rlvec(0 -5) \rlvec(10 0) \rlvec(0 -5) \rlvec(10 10) \rlvec(0 -25) \rlvec(-10 0) \rlvec(0 5) \rlvec(-10 -10) \rlvec(0 5) \rlvec(-30 0) \lfill f:0.8
                    \htext(5 22){$\cdots$}
                    \move(20 15) \rlvec(0 -15) \rmove(10 15) \rlvec(0 -15) \rmove(10 15) \rlvec(0 -15) \rmove(10 10) \rlvec(0 -10) \rmove(10 15) \rlvec(0 -15) \rmove(10 15) \rlvec(0 -15)
                    \move(20 15) \recz \utriz
                    \rmove(0 -10) \ltrio \ltrio
                    \rmove(10 -25) \recz \utriz
                    \rmove(0 -10) \ltrio \ltrio
                    \rmove(10 -25) \recz \utriz
                    \rmove(0 -10) \ltrio \ltrio
                    \rmove(10 -30) \utriz \utriz \reco \reco \utrio \utrio \utrio
                    \rmove(0 -50) \ltrio \rmove(0 10) \ltriz \ltriz \ltriz
                    \rmove(10 -55) \reco \utrio \utrio \utrio \recz \recz \utriz \utriz \utriz \reco \reco \utrio
                    \rmove(0 -90) \ltriz \ltriz \ltriz \rmove(0 10) \ltrio \ltrio \ltrio \rmove(0 10) \ltriz
                    \vtext(25 2){...} \vtext(35 2){...} \vtext(45 2){...} \vtext(55 2){...} \vtext(65 2){...}
                \end{texdraw}} \\
                & = \hpic \cdots \hpic \raisebox{-0.5\height}{\begin{texdraw}
                    \drawdim em \setunitscale 0.15 \linewd 0.4
                    \move(0 30) \rlvec(0 15) \rlvec(10 10) \rlvec(0 -10) \rlvec(10 10) \rlvec(0 -25) \rlvec(-20 0) \lfill f:0.8
                    \move(0 30) \recz \utriz
                    \rmove(10 -15) \recz \utriz
                    \rmove(-10 -10) \ltrio \ltrio
                    \rmove(10 -20) \ltrio \ltrio
                    \move(0 30) \bsegment \lvec(0 -10) \rmove(10 10) \rlvec(0 -10) \rmove(10 10) \rlvec(0 -10) \vtext(5, -8){...} \vtext(15, -8){...} \esegment
                    \vtext(12 12){$=$} \htext(8 5){$G$}
                \end{texdraw}} \hpic \ot \hpic \raisebox{-0.5\height}{\begin{texdraw}
                    \drawdim em \setunitscale 0.15 \linewd 0.4
                    \move(0 30) \rlvec(0 15) \rlvec(10 10) \rlvec(0 -25) \rlvec(-10 0) \lfill f:0.8
                    \move(0 30) \recz \utriz
                    \rmove(0 -10) \ltrio \ltrio
                    \move(0 30) \bsegment \lvec(0 -10) \rmove(10 10) \rlvec(0 -10) \vtext(5, -8){...} \esegment
                    \vtext(7 12){$=$} \htext(3 3){$Y_2$}
                \end{texdraw}} \hpic \ot \hpic \raisebox{-0.5\height}{\begin{texdraw}
                    \drawdim em \setunitscale 0.15 \linewd 0.4
                    \move(0 30) \rlvec(0 25) \rlvec(10 0) \rlvec(0 -15) \rlvec(-10 -10) \lfill f:0.8
                    \move(0 30) \utriz \utriz \reco \reco \utrio \utrio \utrio
                    \rmove(10 -60) \rlvec(0 10) \rmove(-10 0) \ltrio \rmove(0 10) \ltriz \ltriz \ltriz
                    \move(0 30) \bsegment \lvec(0 -10) \rmove(10 10) \rlvec(0 -10) \vtext(5, -8){...} \esegment
                    \vtext(7 12){$=$} \htext(3 3){$Y_1$}
                \end{texdraw}} \hpic \ot \hpic \raisebox{-0.5\height}{\begin{texdraw}
                    \drawdim em \setunitscale 0.15 \linewd 0.4
                    \move(0 30) \rlvec(0 15) \rlvec(10 10) \rlvec(0 -25) \rlvec(-10 0) \lfill f:0.8
                    \move(0 30) \reco \utrio \utrio \utrio \recz \recz \utriz \utriz \utriz \reco \reco \utrio
                    \rmove(0 -90) \ltriz \ltriz \ltriz \rmove(0 10) \ltrio \ltrio \ltrio \rmove(0 10) \ltriz
                    \move(0 30) \bsegment \lvec(0 -10) \rmove(10 20) \rlvec(0 -20) \vtext(5, -8){...} \esegment
                    \vtext(7 12){$=$} \htext(3 3){$Y_0$}
                \end{texdraw}}
            \end{aligned}
        \end{equation*}
    \end{center}
    be a \Yw, where $G$ is the basic ground-state wall of weight $\lambda$. Then it is lengthy but straightforward to calculate the signatures of $Y_2$, $Y_1$ and $Y_0$:
    \begin{equation*}
        \begin{aligned}
            & 0\text{-signature of }Y_2 = +, \\
            & 0\text{-signature of }Y_1 = --++++, \\
            & 0\text{-signature of }Y_0 = +++, \\
            & 1\text{-signature of }Y_2 = +, \\
            & 1\text{-signature of }Y_1 = -, \\
            & 1\text{-signature of }Y_0 = --+.
        \end{aligned}
    \end{equation*}

    Let us verify some of the above calculations. We first focus on the 0-signature of $Y_1$.
    Remove two 0-blocks from $Y_1$ to get
    \begin{center}
        \begin{equation*}
            Y_1^{'} =  \hpic \raisebox{-0.8\height}{\begin{texdraw}
                \drawdim em \setunitscale 0.15 \linewd 0.4
                \move(0 10) \rlvec(0 25) \rlvec(10 0) \rlvec(0 -15) \rlvec(-10 -10) \lfill f:0.8
                \move(0 10) \utriz \utriz \reco \reco \utrio \utrio \utrio
                \rmove(0 -50) \ltrio \rmove(0 10) \ltriz
                \move(0 10) \bsegment \lvec(0 -10) \rmove(10 20) \rlvec(0 -20) \vtext(5, -8){...} \esegment
            \end{texdraw}}\ .
        \end{equation*}
    \end{center}
    Then we have
    \begin{equation*}
        \begin{array}{llll}
            s_2 = 0,        & t_2 = 0,      & \bar{s}_2 = 2,        & \bar{t}_2 = 2, \\
            s_1^{'} = 1,    & t_1^{'} = 4,  & \bar{s_1^{'}} = 5,    & \bar{t_1^{'}} = 8, \\
            s_0 = 7,        & t_0 = 8,      & \bar{s}_0 = 5,        & \bar{t}_0 = 6,
        \end{array}
    \end{equation*}
    which implies
$$H(Y_{2} \otimes Y_{1}')=0, \quad H(Y_{1}' \otimes Y_{0})=0.$$
It follows that $Y' = G \ot Y_2 \ot Y_1^{'} \ot Y_0$ is a \Yw.

    However, if we remove three 0-blocks from $Y_1$, then we get
    \begin{center}
        \begin{equation*}
            Y_1^{''} = \hpic \raisebox{-0.8\height}{\begin{texdraw}
                \drawdim em \setunitscale 0.15 \linewd 0.4
                \move(0 10) \rlvec(0 25) \rlvec(10 0) \rlvec(0 -15) \rlvec(-10 -10) \lfill f:0.8
                \move(0 10) \utriz \utriz \reco \reco \utrio \utrio \utrio
                \rmove(0 -50) \ltrio
                \move(0 10) \bsegment \lvec(0 -10) \rmove(10 20) \rlvec(0 -20) \vtext(5, -8){...} \esegment
            \end{texdraw}}
        \end{equation*}
    \end{center}
    and
    \begin{equation*}
        \begin{array}{llll}
            s_2 = 0,        & t_2 = 0,      & \bar{s}_2 = 2,        & \bar{t}_2 = 2, \\
            s_1^{''} = 0,   & t_1^{''} = 4, & \bar{s_1^{''}} = 4,   & \bar{t_1^{''}} = 8, \\
            s_0 = 7,        & t_0 = 8,      & \bar{s}_0 = 5,        & \bar{t}_0 = 6.
        \end{array}
    \end{equation*}
    Thus
    \begin{equation*}
 H(Y_{2} \otimes Y_1^{''}) = -2,
    \end{equation*}
    which implies $Y^{''} = G \ot Y_2 \ot Y_1^{''} \ot Y_0$ is \textit{not} a \Yw.
    Therefore, we can remove at most two 0-blocks from $Y_1$ and there are two $-$'s in the 0-signature of $Y_1$.

    Now we consider the 1-signature of $Y_0$.
    Add a pair of 1-blocks on $Y_0$ successively to obtain
    \begin{center}
        $Y_0^{'} = $ \hpic \raisebox{-0.8\height}{\begin{texdraw}
            \drawdim em \setunitscale 0.15 \linewd 0.4
            \move(0 10) \rlvec(0 15) \rlvec(10 10) \rlvec(0 -25) \rlvec(-10 0) \lfill f:0.8
            \move(0 10) \reco \utrio \utrio \utrio \recz \recz \utriz \utriz \utriz \reco \reco \utrio \utrio \utrio
            \rmove(0 -110) \ltriz \ltriz \ltriz \rmove(0 10) \ltrio \ltrio \ltrio \rmove(0 10) \ltriz
            \move(0 10) \bsegment \lvec(0 -10) \rmove(10 20) \rlvec(0 -20) \vtext(5, -8){...} \esegment
        \end{texdraw}} \ .
    \end{center}
    Then
    \begin{equation*}
        \begin{array}{llll}
            s_1 = 3,        & t_1 = 4,      & \bar{s}_1 = 7,        & \bar{t}_1 = 8, \\
            s_0^{'} = 7,    & t_0^{'} = 10, & \bar{s_0^{'}} = 5,    & \bar{t_0^{'}} = 8,
        \end{array}
    \end{equation*}
    and hence
 \begin{equation*}
H(Y_{1} \otimes Y_{0}')=0.
\end{equation*}
Therefore $Y^{'} = G \ot Y_2 \ot Y_1 \ot Y_0^{'}$ is a \Yw. However,
there is no place to add more 1-blocks. Thus we conclude there is
one $+$ in the 1-signature of $Y_0$.

    Similarly, one can verify the other signatures.

    Therefore, after canceling out all $(+,-)$-pairs, we see that
    the 0-signature of $Y$ is $(-+++++++)$ with $+$ in $Y_1$ and the 1-signature of $Y$ is $(--+)$ with $-$ in $Y_0$. Hence we obtain
    \begin{center}
        \begin{equation*}
            \begin{aligned}
                \tilde{F_0} Y & = \hpic \raisebox{-0.3\height}{\begin{texdraw}
                    \drawdim em \setunitscale 0.15 \linewd 0.4
                    \move(10 15) \rlvec(0 15) \rlvec(10 10) \rlvec(0 -10) \rlvec(10 10) \rlvec(0 -10) \rlvec(10 10) \rlvec(0 -5) \rlvec(10 0) \rlvec(0 -5) \rlvec(10 10) \rlvec(0 -25) \rlvec(-10 0) \rlvec(0 5) \rlvec(-10 -10) \rlvec(0 5) \rlvec(-30 0) \lfill f:0.8
                    \htext(0 22){$\cdots$}
                    \move(10 15) \rlvec(0 -15) \rmove(10 15) \rlvec(0 -15) \rmove(10 15) \rlvec(0 -15) \rmove(10 10) \rlvec(0 -10) \rmove(10 15) \rlvec(0 -15) \rmove(10 15) \rlvec(0 -15)
                    \move(10 15) \recz \utriz
                    \rmove(0 -10) \ltrio \ltrio
                    \rmove(10 -25) \recz \utriz
                    \rmove(0 -10) \ltrio \ltrio
                    \rmove(10 -25) \recz \utriz
                    \rmove(0 -10) \ltrio \ltrio
                    \rmove(10 -30) \utriz \utriz \reco \reco \utrio \utrio \utrio \grecz
                    \rmove(0 -55) \ltrio \rmove(0 10) \ltriz \ltriz \ltriz
                    \rmove(10 -55) \reco \utrio \utrio \utrio \recz \recz \utriz \utriz \utriz \reco \reco \utrio
                    \rmove(0 -90) \ltriz \ltriz \ltriz \rmove(0 10) \ltrio \ltrio \ltrio \rmove(0 10) \ltriz
                    \vtext(15 5){...} \vtext(25 5){...} \vtext(35 2){...} \vtext(45 2){...} \vtext(55 5){...}
                    \lpatt(0.3 1) \move(45 72) \lellip rx:9 ry:6 \lpatt()
                \end{texdraw}} \hpic , \\ \\
                \tilde{E_1} Y & = \hpic \raisebox{-0.41\height}{\begin{texdraw}
                    \drawdim em \setunitscale 0.15 \linewd 0.4
                    \move(10 15) \rlvec(0 15) \rlvec(10 10) \rlvec(0 -10) \rlvec(10 10) \rlvec(0 -10) \rlvec(10 10) \rlvec(0 -5) \rlvec(20 0) \rlvec(0 -15) \rlvec(-10 -10) \rlvec(0 10) \rlvec(-10 -10) \rlvec(0 5) \rlvec(-30 0) \lfill f:0.8
                    \htext(0 22){$\cdots$}
                    \move(10 15) \rlvec(0 -15) \rmove(10 15) \rlvec(0 -15) \rmove(10 15) \rlvec(0 -15) \rmove(10 10) \rlvec(0 -10) \rmove(10 10) \rlvec(0 -10) \rmove(10 20) \rlvec(0 -20)
                    \move(10 15) \recz \utriz
                    \rmove(0 -10) \ltrio \ltrio
                    \rmove(10 -25) \recz \utriz
                    \rmove(0 -10) \ltrio \ltrio
                    \rmove(10 -25) \recz \utriz
                    \rmove(0 -10) \ltrio \ltrio
                    \rmove(10 -30) \utriz \utriz \reco \reco \utrio \utrio \utrio
                    \rmove(0 -50) \ltrio \rmove(0 10) \ltriz \ltriz \ltriz
                    \rmove(10 -60) \utriz \utriz \reco \reco \utrio \utrio \utrio \recz \recz \utriz \utriz
                    \rmove(0 -80) \ltrio \rmove(0 10) \ltriz \ltriz \ltriz \rmove(0 10) \ltrio \ltrio
                    \vtext(15 5){...} \vtext(25 5){...} \vtext(35 2){...} \vtext(45 2){...} \vtext(55 5){...}
                \end{texdraw}} \hpic .
            \end{aligned}
        \end{equation*}
    \end{center}
    Note that we use the identification (\ref{eqn: identification}) when we compute $\tilde{E_1} Y$.
\end{ex}

For a \Yw\ $Y$, we define
\begin{equation*}
    \wt(Y) = \lambda - k_0 - \frac{1}{2} k_1 \alpha_1,
\end{equation*}
where $k_i$ is the number of $i$-blocks above the ground-state wall of $Y$ and let $\veps_i (Y)$ (resp. $\vphi_i (Y)$) be the number of $-$'s (resp. $+$'s) in the $i$-signature of $Y$ ($i = 0, 1$).

Recall the isomorphism $\Psi: \C \to \BBaff_{\leq 0}$ defined in
\eqref{eq:Baffine}, which induces an injective morphism
\begin{equation*}
    \Psi: \Y (\lambda) \cup \set{0} \to (\BBaff_{\leq 0})^{\ot \infty} \cup \set{0}
\end{equation*}
such that
\begin{equation}
    \begin{aligned}
        \begin{array}{rcl}
            0                       & \longmapsto   & 0 \\
            Y = (Y_k)_{k \geq 0}    & \longmapsto   & \cdots \ot \Psi(Y_k) \ot \cdots \ot \Psi(Y_1) \ot \Psi(Y_0).
        \end{array}
    \end{aligned}
\end{equation}

The following lemma is one of key ingredients of our construction.

\begin{lem} \label{Lemma 4.3}
    Let $Y = (Y_k)_{k \geq 0}$ be a \Yw. Then for $i=0,1$, we have
    \begin{enumerate}
        \item[(a)] $\vphi_i (Y_{k+1}) - \veps_i (Y_k) = \vphi_i (\Psi (Y_{k+1})) - \veps_i (\Psi (Y_k))$ for all $k \geq 0$,
        \item[(b)] $\Psi (\tilde{E}_i Y) = \tilde{e}_i \Psi(Y)$ if $\tilde{E}_i Y \neq 0$,
        \item[(c)] $\Psi (\tilde{F}_i Y) = \tilde{f}_i \Psi(Y)$ if $\tilde{F}_i Y \neq 0$.
    \end{enumerate}
\end{lem}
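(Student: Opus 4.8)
The plan is to deduce the global statements (b) and (c) from the local statement (a), using that on a single column $\Psi$ already intertwines the Kashiwara operators (this is the content of Proposition \ref{prop:Baffine}). The first observation is that the \Yw\ operators $\tilde{E}_i,\tilde{F}_i$ are precisely the signature-rule operators attached to the sequence of integers $\bigl(\veps_i(Y_k),\vphi_i(Y_k)\bigr)_{k\ge0}$, while $\tilde{e}_i,\tilde{f}_i$ on $(\BBaff_{\le 0})^{\ot\infty}$ are the tensor-product operators \eqref{eq:tensor}, which are governed by the signature rule for $\bigl(\veps_i(\Psi(Y_k)),\vphi_i(\Psi(Y_k))\bigr)_{k\ge0}$. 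Since $\tilde{E}_iY$ (resp. $\tilde{F}_iY$) modifies a single column $Y_{k^\ast}$ by the single-column operator, once I know that the \Yw\ signature and the crystal signature select the \emph{same} column, parts (b) and (c) follow at once from Proposition \ref{prop:Baffine} applied to that column.

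So the real content is (a), together with the claim that (a) forces the two reduced signatures to agree columnwise; I would establish the latter first. Rewrite (a) as $\delta^{\vphi}_{k+1}=\delta^{\veps}_k$, where $\delta^{\veps}_k=\veps_i(Y_k)-\veps_i(\Psi(Y_k))$ and $\delta^{\vphi}_k=\vphi_i(Y_k)-\vphi_i(\Psi(Y_k))$. Reducing the signature from the left (highest index first), let $(-)^{m_k}(+)^{p_k}$ be the reduced form of the columns $\ge k$; appending $Y_{k-1}=(-)^{\veps_i(Y_{k-1})}(+)^{\vphi_i(Y_{k-1})}$ one compares $p_k$ with $\veps_i(Y_{k-1})$. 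A short induction, using $\delta^{\veps}_{k-1}=\delta^{\vphi}_k$, shows $p_k^{\mathrm{YW}}-\veps_i(Y_{k-1})=p_k^{\mathrm{cry}}-\veps_i(\Psi(Y_{k-1}))$, so the branching is identical and $m_k^{\mathrm{YW}}-m_k^{\mathrm{cry}}$ is constant in $k$ (the base case at $k=\infty$ being supplied by the ground-state columns, which stabilize on both sides). Hence the increments $m_k-m_{k+1}$ agree, so the rightmost surviving $-$ lies in the same column for both signatures; this settles (b). The mirror induction from the right (base case $k=0$, which is automatic since $\delta^\veps_0=\delta^\veps_0$) gives the same conclusion for the leftmost surviving $+$, settling (c).

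It remains to prove (a). Using $\Psi$ from \eqref{eq:Baffine} and the explicit crystal structure of $\BBad$, the right-hand side $\vphi_i(\Psi(Y_{k+1}))-\veps_i(\Psi(Y_k))$ is a fully explicit expression in the $\bar{s},\bar{t}$ data (for instance $\veps_1=\tfrac12\bar t$, $\vphi_1=\bar s-\tfrac12\bar t$ when $\bar s\ge\bar t$, and similarly for $\veps_0,\vphi_0$ in each regime $\bar s\gtrless\bar t$). For the left-hand side I would compute the global counts directly: removing an $i$-block (resp. a $1$-pair) from $Y_k$ is the single-column $\tilde{e}_i$ and adding one to $Y_{k+1}$ is the single-column $\tilde{f}_i$, so each count is the largest number of such moves keeping $Y$ a \Yw. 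The key structural point is that adding to $Y_{k+1}$ lowers $H(Y_{k+1}\ot Y_k)$ through its $-s_{k+1}$ term while raising the neighboring $H(Y_{k+2}\ot Y_{k+1})$, and removing from $Y_k$ lowers the \emph{same} $H(Y_{k+1}\ot Y_k)$ through its $+s_k$ term while raising $H(Y_k\ot Y_{k-1})$; hence the binding constraint for \emph{both} counts is the single junction $H(Y_{k+1}\ot Y_k)\ge0$. Evaluating it through \eqref{eq:H} and \eqref{eq:h1-a}, capped by the single-column ceilings $\veps_i(\Psi(Y_k))$ and $\vphi_i(\Psi(Y_{k+1}))$, produces $\veps_i(Y_k)$ and $\vphi_i(Y_{k+1})$, and the identity in (a) then drops out.

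The hard part is exactly this last computation. Because the counts $\veps_i(Y_k),\vphi_i(Y_{k+1})$ are \emph{global}, I must verify carefully that it is the junction $H(Y_{k+1}\ot Y_k)\ge0$—and not $H(Y_{k+2}\ot Y_{k+1})$ or $H(Y_k\ot Y_{k-1})$—that binds, and then match the several regimes of the maximum in \eqref{eq:h1-a} against the sign cases $\bar s\gtrless\bar t$ and the two colors $i=0,1$ (with the doubled $1$-blocks and the identification \eqref{eqn: identification} entering when $i=1$). Once the bookkeeping is organized along these cases, the equality of differences in (a) is forced, and the clean signature-matching induction above then delivers (b) and (c).
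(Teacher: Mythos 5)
Your overall architecture is the same as the paper's: the content is concentrated in (a), and (b), (c) follow by showing that the Young-wall signature rule and the tensor-product signature rule select the same column, after which Proposition \ref{prop:Baffine} finishes the job columnwise. Your signature-matching induction is correct, and is in fact carried out more carefully than in the paper, which compresses it into a one-line ``if and only if'': since (a) makes the columnwise increments of the two reduced signatures equal, the column carrying the rightmost surviving $-$ (resp.\ leftmost surviving $+$) agrees on both sides whenever such a sign exists, and its existence on the Young-wall side is exactly the hypothesis $\tilde E_i Y \neq 0$ (resp.\ $\tilde F_i Y \neq 0$). Your ``binding junction'' characterization
\begin{align*}
\vphi_i(Y_{k+1}) &= \max \set{0 \le l \le \vphi_i(\Psi(Y_{k+1})) ~|~ H(\tilde f_i^{\,l} Y_{k+1} \ot Y_k) \ge 0},\\
\veps_i(Y_k) &= \max \set{0 \le l \le \veps_i(\Psi(Y_k)) ~|~ H(Y_{k+1} \ot \tilde e_i^{\,l} Y_k) \ge 0}
\end{align*}
is also exactly the paper's starting point, although its justification needs more than the $\pm s$ bookkeeping you cite: one must check that the $h$-term of \eqref{eq:H} cannot outweigh the $\pm 1$ coming from $s$ (for instance, adding a $0$-block to the right-hand factor raises $s'$ by $1$ but may also raise $h$ by $1$), which is the content of the monotonicity inequalities the paper records.

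The genuine gap is in (a) itself, which is where the lemma lives. After the characterization, your argument is that evaluating everything through \eqref{eq:H} and \eqref{eq:h1-a}, capped by the single-column ceilings, ``produces $\veps_i(Y_k)$ and $\vphi_i(Y_{k+1})$, and the identity in (a) then drops out''; this is precisely the statement to be proved, and you defer it to bookkeeping that you never perform. That bookkeeping is not routine: as $l$ grows, the active branch of the maximum in \eqref{eq:h1-a} and the regimes $\bar s \gtrless \bar t$ change, so the increments of $l \mapsto H(\tilde f_i^{\,l} Y_{k+1} \ot Y_k)$ are not constant, and for $i=1$ the identification \eqref{eqn: identification} interferes; organizing all of this is essentially as long as any proof of the lemma. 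The paper avoids every case distinction with one structural idea that is absent from your proposal: by \eqref{eq:affineE}, $H$ is constant along Kashiwara operators applied to a tensor product, so (when $\vphi_i(u_{k+1}) > \veps_i(u_k)$, writing $u_k = \Psi(Y_k)$) repeated application of $\tilde f_i$ to $u_{k+1} \ot \tilde e_i^{\,l} u_k$, tracked through the tensor-product rule \eqref{eq:tensor}, yields
\begin{equation*}
H(u_{k+1} \ot \tilde e_i^{\,l} u_k) = H\bigl(\tilde f_i^{\,\vphi_i(u_{k+1}) - \veps_i(u_k) + l}\, u_{k+1} \ot u_k \bigr),
\end{equation*}
which transforms the constraint defining $\veps_i(Y_k)$ into the constraint defining $\vphi_i(Y_{k+1})$, shifted by exactly $\vphi_i(u_{k+1}) - \veps_i(u_k)$; comparing the two maxima then gives (a) in a few lines, with a mirror argument when $\vphi_i(u_{k+1}) \le \veps_i(u_k)$, and with no case analysis of \eqref{eq:h1-a} at all. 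To make your proof complete you must either supply this identity or actually carry out the multi-regime computation; as written, (a) is asserted rather than proved.
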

\begin{proof}
    To simplify the notation, we write $u_k = \Psi(Y_k)$ for $k \geq 0$. Recall that the morphism $\Psi: \C \to \BBaff_{\leq 0}$ commutes with $\tilde{e}_i$, $\tilde{f}_i$ ($i=0,1$) whenever all the maps involved send non-zero vectors to non-zero vectors. Hence
    \begin{equation*}
        \vphi_i (Y_k) \leq \vphi_i (u_k), \ \ \veps_i (Y_k) \leq \veps_i (u_k) \ \ \text{for all } k \geq 0.
    \end{equation*}
    Moreover, one can
easily verify that
    \begin{equation*}
        \begin{aligned}
            H (Y_{k+1} \ot \tilde{f}_i Y_k) & \geq H (Y_{k+1} \ot Y_k) \\
            H (\tilde{e}_i Y_{k+1} \ot Y_k) & \geq H (Y_{k+1} \ot Y_k) \\
        \end{aligned}
    \end{equation*}
    for all $k \geq 0$. It follows that
    \begin{equation} \label{eqn: Lemma_1}
        \begin{aligned}
            \vphi_i (Y_{k+1})   & = \max \set{0 \leq l \leq \vphi_i (u_{k+1}) ~|~ H(\tilde{f}_i^l Y_{k+1} \ot Y_k) \geq 0}, \\
            \veps_i (Y_{k})     & = \max \set{0 \leq l \leq \veps_i (u_k) ~|~ H(Y_{k+1} \ot \tilde{e}_i^l Y_k) \geq 0}.
        \end{aligned}
    \end{equation}

    Suppose $\vphi_i (u_{k+1}) > \veps_i (u_k)$. Then, for $0 \leq l \leq \veps_i (u_k)$, using the tensor product rule, we have
    \begin{equation*}
        \begin{aligned}
            u_{k+1} \ot \tilde{e}_i^{l} u_k & \stackrel{i}{\longrightarrow} \cdots \stackrel{i}{\longrightarrow} \tilde{f}_i^{\vphi_i (u_{k+1}) - \veps_i (u_k) + l} u_{k+1} \ot \tilde{e}_i^l u_k \\
            & \longrightarrow \cdots \longrightarrow \tilde{f}_i^{\vphi_i (u_{k+1}) - \veps_i (u_k) + l} u_{k+1} \ot \tilde{f}_i^l \tilde{e}_i^l u_k \\
            & = \tilde{f}_i^{\vphi_i (u_{k+1}) - \veps_i (u_k) + l} u_{k+1} \ot u_k.
        \end{aligned}
    \end{equation*}
    Therefore we obtain
    \begin{equation} \label{eqn: Lemma_2}
        H (u_{k+1} \ot \tilde{e}_i^l u_k) = H (\tilde{f}_i^{\vphi_i (u_{k+1}) - \veps_i (u_k) + l} u_{k+1} \ot u_k)
    \end{equation}
    for $0 \leq l \leq \veps_i (u_k)$.

    Set $l = \veps_i (u_k)$. Then (\ref{eqn: Lemma_1}) and (\ref{eqn: Lemma_2}) yield
    \begin{equation*}
        0 \leq H (Y_{k+1} \ot \tilde{e}_i^{\veps_i (Y_k)} Y_k) = H (\tilde{f}_i^{\vphi_i (u_{k+1}) - \veps_i (u_k) + \veps_i (Y_k)} Y_{k+1} \ot Y_k),
    \end{equation*}
    from which we obtain
    \begin{equation} \label{eqn: Lemma_3}
        \vphi_i (u_{k+1}) - \veps_i (u_k) + \veps_i (Y_k) \leq \vphi_i (Y_{k+1}).
    \end{equation}

    If $\veps_i (Y_k) = \veps_i (u_k)$, then (\ref{eqn: Lemma_3}) gives $\vphi_i (u_{k+1}) \leq \vphi_i (Y_{k+1})$. Thus $\vphi_i (Y_{k+1}) = \veps_i (u_{k+1})$ and we are done.

    If $\veps_i (Y_k) < \veps_i (u_k)$, set $l = \veps_i (Y_k) + 1$. Then by (\ref{eqn: Lemma_1}), we have
    \begin{equation*}
        0 > H (Y_{k+1} \ot \tilde{e}_i^{\veps_i (Y_k) + 1} Y_k) = H (\tilde{f}_i^{\vphi_i (u_{k+1}) - \veps_i (u_k) + \veps_i (Y_k) + 1} Y_{k+1} \ot Y_k).
    \end{equation*}
    Hence we have
    \begin{equation*}
        \vphi_i (u_{k+1}) - \veps_i (u_k) + \veps_i (Y_k) + 1 > \vphi_i (Y_{k+1}),
    \end{equation*}
    which implies
    \begin{equation*}
        \vphi_i (u_{k+1}) - \veps_i (u_k) + \veps_i (Y_k) \geq \vphi_i (Y_{k+1}).
    \end{equation*}
    Combined with (\ref{eqn: Lemma_3}), we conclude
    \begin{equation*}
        \vphi_i (Y_{k+1}) - \veps_i (Y_k) = \vphi_i (u_{k+1}) - \veps_i (u_k)
    \end{equation*}
    as desired.

    The other cases can be checked by a similar calculation.

\vskip 2mm

    (b) By (a), we have
    \begin{equation*}
        \tilde{F}_i Y = \cdots \ot Y_{k+1} \ot \tilde{f}_i Y_k \ot Y_{k-1} \ot \cdots \ot Y_0
    \end{equation*}
    if and only if
    \begin{equation*}
        \tilde{f}_i \Psi (Y) = \cdots \ot \Psi (Y_{k+1}) \ot \tilde{f}_i \Psi (Y_k) \ot \Psi (Y_{k-1}) \ot \cdots \ot \Psi (Y_0).
    \end{equation*}

    Therefore, for $i=0,1$, we have
    \begin{equation*}
        \begin{aligned}
            \Psi (\tilde{F}_i Y) & = \cdots \ot \Psi (Y_{k+1}) \ot \Psi (\tilde{f}_i Y_k) \ot \Psi (Y_{k-1}) \ot \cdots \ot \Psi (Y_0) \\
            & = \cdots \ot \Psi (Y_{k+1}) \ot \tilde{f}_i \Psi (Y_k) \ot \Psi (Y_{k-1}) \ot \cdots \ot \Psi (Y_0) \\
            & = \tilde{f}_i \Psi (Y).
        \end{aligned}
    \end{equation*}

    The statement (c) is an immediate consequence of (b).
\end{proof}

\begin{rem*}
    The argument of our proof applies to any case as long as
    we have a crystal isomorphism $\Psi: \C \to \BBaff_{\leq 0}$, where $\BB$ is a perfect crystal and $\C$ is the set of all columns of a \Yw\ model for $\BB$.
\end{rem*}

Now, let $\lambda = (l-2a) \Lambda_0 + a \Lambda_1$ be a dominant integral weight of level $l$. We first prove:
\begin{prop}
    The set $\Y (\lambda)$ of \Yws\ on $\lambda$ forms a \UAcry.
\end{prop}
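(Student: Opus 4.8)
The plan is to transport the crystal axioms from an ambient infinite tensor product through the embedding $\Psi$. Since $\BBad$ is the crystal of a finite-dimensional $U_q'(A_2^{(2)})$-module (condition (i) of perfectness), its Kashiwara operators satisfy $\veps_i(b)=\max\{n\ge 0 : \tilde e_i^{\,n} b \neq 0\}$ and $\vphi_i(b)=\max\{n\ge 0 : \tilde f_i^{\,n} b \neq 0\}$, and these properties are inherited by the affinization $\BBaff$ and by the tensor product rule \eqref{eq:tensor}. Consequently the set of sequences in $(\BBaff_{\leq 0})^{\ot\infty}$ that stabilize to $(a,a)(0)$ carries a \UAcry\ structure containing $\Psi(\Y(\lambda))$. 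By Lemma \ref{Lemma 4.3} the map $\Psi$ is injective, sends $0$ to $0$, and satisfies $\Psi(\tilde E_i Y)=\tilde e_i\Psi(Y)$ and $\Psi(\tilde F_i Y)=\tilde f_i\Psi(Y)$ whenever the left-hand sides are nonzero. The strategy is therefore to first match the combinatorial data $\wt$, $\veps_i$, $\vphi_i$ on $\Y(\lambda)$ with the crystal-theoretic data on the image, and then read off each of the seven axioms.

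The weight match is a direct count: $\tilde F_0$ adds a single $0$-block and $\tilde F_1$ adds two $1$-blocks, so from the definition of $\wt(Y)$ one checks $\wt(\tilde E_i Y)=\wt(Y)+\alpha_i$ and $\wt(\tilde F_i Y)=\wt(Y)-\alpha_i$, which already gives axioms (ii) and (iii); comparing the same count with the weights of the factors $\Psi(Y_k)=(x_k,y_k)(-s_k)$ yields $\wt(Y)=\wt(\Psi(Y))$ in $P$.

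The heart of the argument, and the step I expect to require the most care, is the identity $\veps_i(Y)=\veps_i(\Psi(Y))$ and $\vphi_i(Y)=\vphi_i(\Psi(Y))$ for $i=0,1$. By construction $\veps_i(Y),\vphi_i(Y)$ are the numbers of $-$'s and $+$'s surviving after cancellation in the concatenated \emph{column} signatures, whose $k$-th block is $(-)^{\veps_i(Y_k)}(+)^{\vphi_i(Y_k)}$, whereas $\veps_i(\Psi(Y)),\vphi_i(\Psi(Y))$ arise from the tensor product signature rule applied to the concatenation whose $k$-th block is $(-)^{\veps_i(u_k)}(+)^{\vphi_i(u_k)}$, where $u_k=\Psi(Y_k)$. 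These blocks differ, since $\veps_i(Y_k)\le\veps_i(u_k)$ and $\vphi_i(Y_k)\le\vphi_i(u_k)$ and the two truncations are governed by different neighbours. The point is that Lemma \ref{Lemma 4.3}(a) supplies exactly the junction identity $\vphi_i(Y_{k+1})-\veps_i(Y_k)=\vphi_i(u_{k+1})-\veps_i(u_k)$, which controls the net cancellation across each interface of adjacent factors; an induction on these interfaces then shows that the excess $-$'s of $u_k$ are precisely absorbed by the excess $+$'s of $u_{k+1}$, so both cancellation procedures terminate with the same reduced string. This is where the global, rather than merely columnwise, nature of the signature forces one to invoke Lemma \ref{Lemma 4.3}(a).

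Once the three matchings are in place, the remaining axioms follow mechanically. Axiom (i), $\vphi_i(Y)=\veps_i(Y)+\langle h_i,\wt(Y)\rangle$, is transported from the corresponding identity for $\Psi(Y)$. For axioms (iv) and (v) one computes, for example, $\veps_i(\tilde E_i Y)=\veps_i(\Psi(\tilde E_i Y))=\veps_i(\tilde e_i\Psi(Y))=\veps_i(\Psi(Y))-1=\veps_i(Y)-1$ using Lemma \ref{Lemma 4.3}(b), and similarly for $\vphi_i$ and for $\tilde F_i$. Axiom (vi) follows from the injectivity of $\Psi$ together with the equivalence $\tilde f_i b=b'\Leftrightarrow b=\tilde e_i b'$ in the tensor product crystal, and axiom (vii) is vacuous because $\vphi_i(Y)$ is a nonnegative integer for every $Y$. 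Finally, the matching of $\vphi_i$ and $\veps_i$ also gives $\tilde F_i Y=0\Leftrightarrow\vphi_i(Y)=0\Leftrightarrow\tilde f_i\Psi(Y)=0$ together with the dual statement, so $\Psi(\Y(\lambda))$ is genuinely closed under the Kashiwara operators and no element escapes the image; this completes the verification that $\Y(\lambda)$ is a \UAcry.
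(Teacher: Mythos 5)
Your strategy is sound and rests, at bottom, on the same mechanism as the paper's proof: everything is funneled through the column isomorphism $\Psi$ of Proposition \ref{prop:Baffine} and through Lemma \ref{Lemma 4.3}. The genuine difference is in what gets verified. The paper checks only axiom (i), $\vphi_i(Y)-\veps_i(Y)=\langle h_i,\wt(Y)\rangle$, regarding the remaining axioms as immediate from the signature formalism and Lemma \ref{Lemma 4.3}(b),(c); and it proves (i) without ever forming $\veps_i(\Psi(Y))$ or $\vphi_i(\Psi(Y))$ for the whole wall. It expresses $\langle h_i,\wt(Y)\rangle$ as the sum of the column quantities $\vphi_i(\Psi(Y_k))-\veps_i(\Psi(Y_k))$ and converts this sum, by Abel summation, into $\sum_k(\vphi_i(Y_k)-\veps_i(Y_k))=\vphi_i(Y)-\veps_i(Y)$, so that Lemma \ref{Lemma 4.3}(a) enters only through the paired differences $\vphi_i(Y_{k+1})-\veps_i(Y_k)$. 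You instead assert the pointwise identities $\veps_i(Y)=\veps_i(\Psi(Y))$ and $\vphi_i(Y)=\vphi_i(\Psi(Y))$, which the paper nowhere states, and transport all seven axioms through the resulting strict embedding. Your route proves more and makes axioms (iv)--(vi) transparent (the paper is silent on why they hold); the paper's route is leaner precisely because only differences are ever needed, and differences are exactly what Lemma \ref{Lemma 4.3}(a) telescopes.

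The cost of the stronger claim is paid at the two ends of your ``induction on interfaces,'' and this is where the sketch has a concrete gap. Writing $u_k=\Psi(Y_k)$, Lemma \ref{Lemma 4.3}(a) does give that the number of excess $-$'s of $u_k$, namely $\veps_i(u_k)-\veps_i(Y_k)$, equals the number of excess $+$'s of $u_{k+1}$, namely $\vphi_i(u_{k+1})-\vphi_i(Y_{k+1})$; and since these extra symbols sit adjacently in the concatenated string they self-cancel. But the induction needs anchors at both ends. At the right end one must know that $Y_0$ has no excess $+$'s, i.e. $\vphi_i(Y_0)=\vphi_i(u_0)$; this is true because $Y_0$ has no right neighbour and $H(Y_1\ot \tilde{f}_i Y_0)\geq H(Y_1\ot Y_0)$, but it must be said, since unabsorbed $+$'s at the right end would inflate $\vphi_i(\Psi(Y))$. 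At the left end the issue is sharper, because it decides whether your central identity is true at all. For columns deep inside the ground-state part of the wall one computes $\veps_i(Y_k)=\vphi_i(Y_k)=0$ (adding or removing any block there forces $H<0$), while $u_k=(a,a)(0)$ has $\veps_i=\vphi_i=\langle h_i,\lambda\rangle$; so every far column carries an excess of $\langle h_i,\lambda\rangle$ $-$'s. If the ambient structure on the stabilizing sequences in $(\BBaff_{\leq 0})^{\ot\infty}$ is taken to be the limit of the finite truncations of the tensor product rule, these excesses survive and one gets $\veps_i(\Psi(Y))=\veps_i(Y)+\langle h_i,\lambda\rangle$, so the matching fails. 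It holds only for the path-space structure of \cite{KMN1}, in which the ground-state tail is treated as a highest weight element --- equivalently, one cancels the full semi-infinite signature, so that each far $-$ is matched by a $+$ coming from the column to its left. Your phrase ``the tensor product signature rule applied to the concatenation'' is compatible with this reading, but since the key step turns on it, the convention and the two boundary computations have to be made explicit. (The same boundary facts are tacitly used in the paper's own summation step, so supplying them would tighten both arguments.)
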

\begin{proof}
    Let $Y = (Y_k)_{k \geq 0} = \langle s_k, \bar{s}_k, \bar{t}_k \rangle _{k \geq 0}$ be a \Yw.
    We have only to check
    \begin{equation*}
        \langle h_i, \wt(Y) \rangle = \vphi_i (Y) - \veps_i (Y) \ \text{ for } i=0, 1.
    \end{equation*}
    Recall that
    \begin{equation*}
        \begin{aligned}
            \Psi (Y_k) & = \Psi (\langle s_k, \bar{s}_k, \bar{t}_k \rangle) = \begin{cases}
                (\bar{s}_k - \frac{1}{2} \bar{t}_k, \frac{1}{2} \bar{t}_k)(-s_k) & \text{if} \quad \bar{s}_k \geq \bar{t}_k, \\
                (l - \frac{1}{2} \bar{t}_k, l - \bar{s}_k + \frac{1}{2} \bar{t}_k)(-s_k) & \text{if} \quad \bar{s}_k < \bar{t}_k.
            \end{cases}
        \end{aligned}
    \end{equation*}
    Then
    \begin{equation*}
        \wt \Psi (Y_k) = 2(\bar{t}_k - \bar{s}_k) \Lambda_0 + 2(\bar{s}_k - \bar{t}_k) \Lambda_1 - s_k \delta = -s_k \alpha_0 - \frac{1}{2} t_k \alpha_1.
    \end{equation*}

    Let $N$ be the smallest non-negative integer such that $Y_k$ is the basic ground-state column for all $k \geq N$. Then
    \begin{equation*}
        \begin{aligned}
            \wt \Psi (Y) & = \lambda - \sum_{k=0}^{N-1} (s_k \alpha_0 + \frac{1}{2} t_k \alpha_1) \\
                & = \lambda - k_0 \alpha_0 - \frac{1}{2} k_1 \alpha_1 = \wt (Y),
        \end{aligned}
    \end{equation*}
    where $k_i$ is the number of $i$-blocks above the ground-state wall of $Y$. Hence by Lemma \ref{Lemma 4.3}, we have
    \begin{equation*}
        \begin{aligned}
            \langle h_i, \wt (Y) \rangle & = \langle h_i, \wt \Psi (Y) \rangle \\
                & = \langle h_i, \sum_{k \geq 0} \wt \Psi (Y_k) \rangle \\
                & = \sum_{k \geq 0} \langle h_i, \wt \Psi (Y_k) \rangle \\
                & = \sum_{k \geq 0} (\vphi_i (\Psi (Y_k)) - \veps_i (\Psi (Y_k))) \\
                & = \sum_{k \geq 0} (\vphi_i (Y_k) - \veps_i (Y_k)) = \vphi_i (Y) - \veps_i (Y),
        \end{aligned}
    \end{equation*}
    which proves our claim.
\end{proof}

Next, we will show that the set $\R (\lambda)$ of \rYws\ on $\lambda$ provides a realization of the \UAcry\ $\BB (\lambda)$.

\begin{thm}
    The injective morphism $\Psi: \Y (\lambda) \to (\BB^\textnormal{aff}_{\leq 0})^{\ot \infty}$ induces a \UAcry\ isomorphism
    $$\Psi: \R (\lambda) \stackrel{\sim}{\longrightarrow} \PP^\textnormal{aff} (\lambda, \mathbf{0}) \ \ \text{sending} \ \ Y_\lambda \mapsto \mathbf{b}_\lambda^\textnormal{aff},$$
    where $\mathbf{0} = (\cdots, 0, \cdots, 0)$, $Y_\lambda$ is the basic ground-state wall of weight $\lambda$ and
    \begin{equation*}
        \mathbf{b}_\lambda^\textnormal{aff} = (\cdots, (a,a)(0), \cdots, (a,a)(0))
    \end{equation*}
    is the affine ground-state path of weight $\lambda$ in $\PP^\textnormal{aff} (\lambda, \mathbf{0})$.

    In particular, we have a \UAcry\ isomorphism
    $$\R (\lambda) \stackrel{\sim}{\longrightarrow} \BB (\lambda) \ \ \text{sending} \ \  Y_\lambda \mapsto
    u_\lambda.$$
\end{thm}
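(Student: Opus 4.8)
The plan is to prove the theorem in two stages: first show that the injective morphism $\Psi$ restricts to a bijection of $\R(\lambda)$ onto $\PPaff (\lambda, \mathbf{0})$ that is compatible with the crystal structure, and then invoke the affine path realization $\PPaff (\lambda, \mathbf{0}) \cong \BB(\lambda)$ recalled above (from \cite{KMN1}), under which $\mathbf{b}_\lambda^\textnormal{aff} \mapsto u_\lambda$. Composing the two isomorphisms yields $\R(\lambda) \cong \BB(\lambda)$ with $Y_\lambda \mapsto u_\lambda$, so everything reduces to the first stage.

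I would begin with the ground state. Applying \eqref{eq:Baffine} to the basic ground-state column, which is $\langle s, \bar{s}, \bar{t} \rangle = \langle 0, 2a, 2a \rangle$ (uniformly in the three cases $a=0$, $0<a<\lfloor l/2\rfloor$, $a=l/2$, as one checks against $\Phi((a,a)(0)) = \langle 0, 2a, 2a\rangle$), gives $\Psi = (2a-a, a)(0) = (a,a)(0)$, whence $\Psi(Y_\lambda) = \mathbf{b}_\lambda^\textnormal{aff}$. For the set-level identity $\Psi(\R(\lambda)) = \PPaff(\lambda, \mathbf{0})$, note by \eqref{eq:H} that the condition $H(\Psi(Y_{k+1}) \otimes \Psi(Y_k)) = 0$ defining a \rYw\ is literally the energy condition (iii) for $\PPaff(\lambda, \mathbf{0})$ (with $b_k=(a,a)$, $m_k=0$, and $H((a,a)(0)\otimes(a,a)(0))=0$), while $Y_k$ being the basic ground-state column for $k\gg0$ matches condition (ii); this gives $\Psi(\R(\lambda)) \subseteq \PPaff(\lambda, \mathbf{0})$. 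For surjectivity I run $\Phi = \Psi^{-1}$ of Proposition \ref{prop:Baffine} backwards: given $\mathbf{p}=(p_k(n_k)) \in \PPaff(\lambda, \mathbf{0})$, condition (iii) reads $n_{k+1}-n_k = h(p_{k+1}\otimes p_k)$, and since $h$ in \eqref{eq:h1} is a maximum including the two terms $\pm[(x_1+y_1)-(x_2+y_2)]$ it is non-negative; as $n_k=0$ for $k\gg0$, this forces $n_k \le 0$ for all $k$. Hence every $p_k(n_k)$ lies in $\BBaff_{\leq 0}$, the columns $Y_k := \Phi(p_k(n_k))$ are defined, and the resulting wall $Y$ satisfies $H(Y_{k+1}\otimes Y_k)=0$ and is eventually basic ground-state, so $Y \in \R(\lambda)$ with $\Psi(Y)=\mathbf{p}$.

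It remains to match the crystal data. The preceding proposition gives $\wt(Y) = \wt(\Psi(Y))$, and Lemma \ref{Lemma 4.3}(b),(c) gives $\Psi(\tilde{E}_i Y) = \tilde{e}_i \Psi(Y)$ and $\Psi(\tilde{F}_i Y) = \tilde{f}_i \Psi(Y)$ whenever the left-hand operators are non-zero. To upgrade this to a crystal isomorphism I must also establish $\veps_i(Y) = \veps_i(\Psi(Y))$ and $\vphi_i(Y) = \vphi_i(\Psi(Y))$, which in particular pins down the vanishing cases $\tilde{E}_i Y = 0 \Leftrightarrow \tilde{e}_i \Psi(Y) = 0$ and $\tilde{F}_i Y = 0 \Leftrightarrow \tilde{f}_i \Psi(Y) = 0$. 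Here Lemma \ref{Lemma 4.3}(a) is decisive: setting $d_{k+1} := \vphi_i(\Psi(Y_{k+1})) - \vphi_i(Y_{k+1}) = \veps_i(\Psi(Y_k)) - \veps_i(Y_k) \ge 0$, the $i$-signature of the column $Y_k$ used in the \Yw\ rule is obtained from the full local signature ($\veps_i(\Psi(Y_k))$ minuses followed by $\vphi_i(\Psi(Y_k))$ pluses) by deleting, at each interface, exactly the $d_{k+1}$ topmost pluses of $Y_{k+1}$ together with the $d_{k+1}$ bottommost minuses of $Y_k$. These are genuine adjacent $(+,-)$ pairs, so cancelling them first leaves the fully reduced $i$-signature unchanged, and the reduced signatures of $Y$ and $\Psi(Y)$ agree. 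Combined with Lemma \ref{Lemma 4.3} and the closedness of $\PPaff(\lambda, \mathbf{0})$ under the Kashiwara operators, this shows $\R(\lambda)$ is stable under $\tilde{E}_i,\tilde{F}_i$ and that $\Psi|_{\R(\lambda)}$ is a bijective crystal morphism, hence a \UAcry\ isomorphism onto $\PPaff(\lambda,\mathbf{0})$.

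The main obstacle is precisely this last reconciliation: the \Yw\ operators $\tilde{E}_i,\tilde{F}_i$ are defined through signatures computed globally on the whole wall, whereas $\tilde{e}_i,\tilde{f}_i$ act on a single affine tensor factor, and it is not a priori clear that the two agree (especially when one side vanishes). Lemma \ref{Lemma 4.3} is engineered exactly to bridge this gap, reducing the global signature computation to the local one via the interface cancellations above; once that is in hand, everything else is a routine transcription between \rYws\ and affine $\lambda$-paths.
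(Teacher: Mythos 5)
Your proposal is correct, and it reaches the theorem by a genuinely different decomposition than the paper, even though both hinge on Lemma \ref{Lemma 4.3}. The paper's proof has two steps: first it shows $\R(\lambda)$ is stable under the nonzero Kashiwara operators by a direct energy computation --- if $\tilde{F}_i$ acts on the column $Y_k$ of a reduced wall, then by \eqref{eq:affineE} one has $H(Y_{k+1} \ot \tilde{f}_i Y_k) = H(\tilde{f}_i(Y_{k+1} \ot Y_k)) = H(Y_{k+1} \ot Y_k) = 0$ and likewise $H(\tilde{f}_i Y_k \ot Y_{k-1}) = 0$, so $\tilde{F}_i Y$ is again reduced; it then concludes from Lemma \ref{Lemma 4.3} and $\Psi(Y_\lambda) = \mathbf{b}_\lambda^{\textnormal{aff}}$, with surjectivity left implicit in the connectedness of $\PPaff(\lambda, \mathbf{0}) \cong \BB(\lambda)$ (every path is reached from $\mathbf{b}_\lambda^{\textnormal{aff}}$ by lowering operators, and the image of $\R(\lambda)$ is closed under them). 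You instead prove the set-level bijection directly: your surjectivity argument, that condition (iii) forces $n_{k+1} - n_k = h(p_{k+1} \ot p_k) \geq 0$ and hence $n_k \leq 0$, so that $\Phi$ of Proposition \ref{prop:Baffine} applies factor by factor, is an explicit inverse construction which moreover justifies the assertion, stated without proof in Section 2, that affine $\lambda$-paths satisfy $m_k \geq 0$. You then match the crystal structures by proving equality of reduced $i$-signatures via Lemma \ref{Lemma 4.3}(a) together with the observation that the interface deficits $d_{k+1}$ occur as contiguous blocks $+^{d_{k+1}} -^{d_{k+1}}$, whose deletion is a sequence of legitimate $(+,-)$-cancellations; this makes explicit the equalities $\veps_i(Y) = \veps_i(\Psi(Y))$, $\vphi_i(Y) = \vphi_i(\Psi(Y))$ and the vanishing correspondence $\tilde{F}_i Y = 0 \iff \tilde{f}_i \Psi(Y) = 0$, which the paper leaves buried in the terse ``if and only if'' step of Lemma \ref{Lemma 4.3}(b). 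In short, the paper's route is shorter because it leans on connectedness and lets Lemma \ref{Lemma 4.3} absorb all signature bookkeeping; your route is longer but yields an explicit inverse, handles the boundary (vanishing) cases in full, and needs the \cite{KMN1} realization only for the final identification with $\BB(\lambda)$.
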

\begin{proof}
    Let $Y = (Y_k)_{k \geq 0}$ be a \rYw\ on $\lambda$. Assume that $\tilde{F}_i Y \neq 0$ and $\tilde{F}_i$ acts on the column $Y_k$ by $\tilde{f}_i$ ($i=0,1$). That is, $\vphi_i (Y_{k+1}) \leq \veps (Y_k)$, $\vphi_i (Y_k) > \veps (Y_{k-1})$ and
    \begin{equation*}
        \tilde{F}_i Y = \cdots \ot Y_{k+1} \ot \tilde{f}_i Y_k \ot Y_{k-1} \ot \cdots Y_0.
    \end{equation*}
    Since $Y$ is reduced, we have
    \begin{equation*}
        \begin{aligned}
            H(Y_{k+1} \ot \tilde{f}_i Y_k) & = H(\tilde{f}_i (Y_{k+1} \ot Y_k)) \\
                & = H(Y_{k+1} \ot Y_k) = 0, \\
            H(\tilde{f}_i Y_k \ot Y_{k-1}) & = H(\tilde{f}_i (Y_k \ot Y_{k-1})) \\
                & = H(Y_k \ot Y_{k-1}) = 0.
        \end{aligned}
    \end{equation*}

    Hence $Y$ and $\tilde{F}_i Y$ belongs to the same connected component, which implies $\tilde{F}_i Y \in \R (\lambda)$.

    By a similar argument, one can show that if $\tilde{E}_i Y \neq 0$, then $\tilde{E}_i Y \in \R (\lambda)$.

    Since $\Psi$ maps $Y_\lambda$ to $\mathbf{b}^\text{aff}_\lambda$, by Lemma \ref{Lemma 4.3}, $\Psi$ induces a \UAcry\ isomorphism $\R (\lambda) \stackrel{\sim}{\longrightarrow} \PPaff (\lambda, \mathbf{0})$.
\end{proof}

\begin{ex}
    Let $\lambda = 2 \Lambda_0 + \Lambda_1$. We illustrate part of the \UAcry\ $\R
    (\lambda)$.
    \begin{center}
        \begin{texdraw}
            \drawdim em \setunitscale 0.15 \linewd 0.4 \arrowheadtype t:V \arrowheadsize l:3 w:3

            \move(0 15) \rlvec(0 15) \rlvec(10 10) \rlvec(0 -5) \rlvec(10 0) \rlvec(0 -5) \rlvec(10 10) \rlvec(0 -25) \rlvec(-10 0) \rlvec(0 5) \rlvec(-10 -10) \rlvec(0 5) \rlvec(-10 0) \lfill f:0.8
            \move(0 15) \rlvec(0 -15) \rmove(10 10) \rlvec(0 -10) \rmove(10 15) \rlvec(0 -15) \rmove(10 15) \rlvec(0 -15)
            \move(0 15) \recz \utriz \utriz
            \rmove(0 -20) \ltrio \ltrio
            \rmove(10 -30) \utriz \utriz \reco \reco \utrio \utrio \utrio \recz
            \rmove(0 -55) \ltrio \rmove(0 10) \ltriz \ltriz \ltriz
            \rmove(10 -55) \reco \utrio \utrio \utrio \recz \recz \utriz \utriz \utriz \reco \reco \utrio
            \rmove(0 -90) \ltriz \ltriz \ltriz \rmove(0 10) \ltrio \ltrio \ltrio \rmove(0 10) \ltriz
            \vtext(5 5){...} \vtext(15 2){...} \vtext(25 2){...}

            \move(140 15) \rlvec(0 15) \rlvec(10 10) \rlvec(0 -5) \rlvec(10 0) \rlvec(0 -5) \rlvec(10 10) \rlvec(0 -25) \rlvec(-10 0) \rlvec(0 5) \rlvec(-10 -10) \rlvec(0 5) \rlvec(-10 0) \lfill f:0.8
            \move(140 15) \rlvec(0 -15) \rmove(10 10) \rlvec(0 -10) \rmove(10 15) \rlvec(0 -15) \rmove(10 15) \rlvec(0 -15)
            \move(140 15) \recz \utriz \utriz
            \rmove(0 -20) \ltrio \ltrio
            \rmove(10 -30) \utriz \utriz \reco \reco \utrio \utrio \utrio
            \rmove(0 -50) \ltrio \rmove(0 10) \ltriz \ltriz \ltriz
            \rmove(10 -55) \reco \utrio \utrio \utrio \recz \recz \utriz \utriz \utriz \reco \reco \utrio \utrio \utrio
            \rmove(0 -110) \ltriz \ltriz \ltriz \rmove(0 10) \ltrio \ltrio \ltrio \rmove(0 10) \ltriz
            \vtext(145 5){...} \vtext(155 2){...} \vtext(165 2){...}

            \lpatt(0.3 1) \move(15 72) \lellip rx:9 ry:6 \move(165 120) \lellip rx:13 ry:14 \lpatt()

            \move(70 75) \rlvec(0 15) \rlvec(10 10) \rlvec(0 -5) \rlvec(10 0) \rlvec(0 -5) \rlvec(10 10) \rlvec(0 -25) \rlvec(-10 0) \rlvec(0 5) \rlvec(-10 -10) \rlvec(0 5) \rlvec(-10 0) \lfill f:0.8
            \move(70 75) \rlvec(0 -15) \rmove(10 10) \rlvec(0 -10) \rmove(10 15) \rlvec(0 -15) \rmove(10 15) \rlvec(0 -15)
            \move(70 75) \recz \utriz \utriz
            \rmove(0 -20) \ltrio \ltrio
            \rmove(10 -30) \utriz \utriz \reco \reco \utrio \utrio \utrio
            \rmove(0 -50) \ltrio \rmove(0 10) \ltriz \ltriz \ltriz
            \rmove(10 -55) \reco \utrio \utrio \utrio \recz \recz \utriz \utriz \utriz \reco \reco \utrio
            \rmove(0 -90) \ltriz \ltriz \ltriz \rmove(0 10) \ltrio \ltrio \ltrio \rmove(0 10) \ltriz
            \vtext(75 65){...} \vtext(85 62){...} \vtext(95 62){...}
            \move(57 85) \ravec(-14 -8) \htext(48 84){0}
            \move(114 85) \ravec(14 -8) \htext(121 84){1}

            \move(20 200) \rlvec(0 15) \rlvec(10 10) \rlvec(0 -25) \rlvec(-10 0) \lfill f:0.8
            \move(20 200) \recz \utriz \utriz
            \rmove(0 -20) \ltrio \ltrio
            \move(20 200) \bsegment \lvec(0 -10) \rmove(10 10) \rlvec(0 -10) \vtext(5, -8){...} \esegment

            \move(140 200) \rlvec(0 15) \rlvec(10 10) \rlvec(0 -25) \rlvec(-10 0) \lfill f:0.8
            \move(140 200) \reco \utrio \utrio \utrio
            \rmove(0 -30) \ltriz \ltriz
            \move(140 200) \bsegment \lvec(0 -10) \rmove(10 10) \rlvec(0 -10) \vtext(5, -8){...} \esegment

            \lpatt(0.3 1) \move(23 223) \lellip rx:9 ry:7 \move(145 225) \lellip rx:13 ry:14 \lpatt()

            \move(37 185) \ravec(7 -14) \rtext td:-60 (46 165){$\cdots$}
            \move(134 185) \ravec(-7 -14) \rtext td:-120 (122 165){$\cdots$}

            \htext(72 247){$Y_\lambda = $} \move(88 240) \rlvec(0 15) \rlvec(10 10) \rlvec(0 -25) \rlvec(-10 0) \lfill f:0.8
            \move(88 240) \recz \utriz
            \rmove(0 -10) \ltrio \ltrio
            \move(88 240) \bsegment \lvec(0 -10) \rmove(10 10) \rlvec(0 -10) \vtext(5, -8){...} \esegment
            \move(57 245) \ravec(-14 -8) \htext(48 243){0}
            \move(114 245) \ravec(14 -8) \htext(121 243){1}
        \end{texdraw}
    \end{center}
\end{ex}

\vskip 5mm

\section{Young walls for $\BB (\infty)$}

\vskip 2mm

Let $\BB = \Z \times \Z$. We define a \UApcry\ structure on $\BB$ by
\begin{equation}
    \begin{aligned}
        \wt (x,y) & = 2(y-x) \Lambda_0 + (x-y) \Lambda_1, \\
        \veps_1 (x,y) & = y, \quad \vphi_1 (x,y) = x, \\
        \veps_0 (x,y) & = -2y + \abs{x-y}, \\
        \vphi_0 (x,y) & = -2x + \abs{x-y}, \\
        \tilde{e}_1 (x,y) & = (x+1, y-1), \\
        \tilde{f}_1 (x,y) & = (x-1, y+1), \\
        \tilde{e}_0 (x,y) & = \begin{cases}
            (x-1, y) & \text{if} \quad x > y, \\
            (x, y+1) & \text{if} \quad x \leq y,
        \end{cases} \\
        \tilde{f}_0 (x,y) & = \begin{cases}
            (x+1, y) & \text{if} \quad x \geq y, \\
            (x, y-1) & \text{if} \quad x < y.
        \end{cases}
    \end{aligned}
\end{equation}

Let $\BB_l$ be the level $l$ adjoint crystal. For $\lambda = (l-2a) \Lambda_0 + a \Lambda_1$ ($0 \leq a \leq \lfloor \frac{l}{2} \rfloor$), let $T_\lambda = \set{t_\lambda}$ be the crystal with the maps
\begin{equation*}
    \begin{aligned}
        & \wt (t_\lambda) = \lambda, \quad \veps_i (t_\lambda) = \vphi_i (t_\lambda) = -\infty, \\
        & \tilde{e}_i (t_\lambda) = \tilde{f}_i (t_\lambda) = 0 \ \text{ for } i=0,1.
    \end{aligned}
\end{equation*}

As in \cite{KKM}, $\set{\BB_l ~|~ l \geq 1}$ is a \textit{coherent family} with the limit $\BB$, where the inductive system is given by
\begin{equation*}
    \begin{array}{lrl}
        f_{l, (a,a)}:   & T_\lambda \ot \BB_l \ot T_{-\lambda}  & \longrightarrow \BB \\
                        & t_\lambda \ot (x,y) \ot t_{-\lambda}  & \longmapsto (x-a, y-a).
    \end{array}
\end{equation*}
Thus by the same argument given in \cite{KKM}, we have the path realization
\begin{equation}
    \BB (\infty) \stackrel{\sim}{\longrightarrow} \PP (\infty) \text{ as \UApcrys,}
\end{equation}
where $\PP (\infty)$ is the set of \textit{$\infty$-paths} $\mathbf{p} = (x_k, y_k)_{k \geq 0}$ such that
\begin{enumerate}
    \item $x_k, y_k \in \Z$ for all $k \geq 0$,
    \item $(x_k, y_k) = (0,0)$ for $k \gg 0$.
\end{enumerate}

As is the case with $B_{\text{ad}}$, we define a classical energy
function $h: \BB \ot \BB \to \Z$ by the formula \eqref{eq:h1} and
let $H: \BBaff \ot \BBaff \to \Z$ be the affine energy function
associated with $h$. Then we have a \UAcry\ isomorphism
\begin{equation}
    \BB (\infty) \stackrel{\sim}{\longrightarrow} \PPaff (\infty),
\end{equation}
where $\PPaff (\infty)$ is the set of all \textit{affine $\infty$-paths} $\mathbf{p} = ((x_k, y_k)(-m_k))_{k \geq 0}$ such that
\begin{enumerate}
    \item $x_k, y_k \in \Z$, $m_k \in \Z_{\geq 0}$ for all $k \geq 0$,
    \item $(x_k, y_k)(-m_k) = (0,0)(0)$ for $k \gg 0$,
    \item $H((x_{k+1}, y_{k+1})(-m_{k+1}) \ot (x_k, y_k)(-m_k)) = 0$ for all $k \geq 0$.
\end{enumerate}

Now we construct a \Yw\ model for the (level $\infty$) adjoint crystal $\BB$. We only use the colored blocks of half-unit thickness:
\begin{center}
    \vpic \raisebox{-0.4\height}{
        \begin{texdraw}
            \drawdim em \setunitscale 0.15  \linewd 0.4 \move(0 0)\lvec(10 0)\lvec(10 10)\lvec(0 10)\lvec(0 0) \move(10 0)\lvec(12.5 2.5)\lvec(12.5 12.5)\lvec(2.5 12.5)\lvec(0 10) \move(10 10)\lvec(12.5 12.5)  \htext(3.5 3){0}
        \end{texdraw}} \hskip 1mm , \hskip 1mm
    \raisebox{-0.4\height}{
        \begin{texdraw}
            \drawdim em \setunitscale 0.15  \linewd 0.4 \move(0 0)\lvec(10 0)\lvec(10 10)\lvec(0 10)\lvec(0 0) \move(10 0)\lvec(12.5 2.5)\lvec(12.5 12.5)\lvec(2.5 12.5)\lvec(0 10) \move(10 10)\lvec(12.5 12.5)  \htext(3.5 3){1}
        \end{texdraw}}
\end{center}
\vpic

The rules and patterns for the \Yw\ model is explained below.
\begin{enumerate}
    \item[(1)] The colored blocks will be stacked in the following way.
    \begin{center}
        \raisebox{-0.4\height}{\begin{texdraw}
            \drawdim em \setunitscale 0.15 \linewd 0.4
            \move(5 10) \utriz \utriz \utriz \utriz \utriz
            \rmove(0 -50) \ltrio \ltrio \ltrio \ltrio \ltrio
            \move(5 10) \bsegment \lvec(0 -10) \rmove(10 10) \rlvec(0 -10) \vtext(5, -8){...} \esegment
            \move(5 60) \bsegment \lvec(0 10) \rmove(10 -10) \rlvec(0 10) \vtext(5, 2){...} \esegment
            \move(0 30) \rlvec(20 0)
        \end{texdraw}} \hskip 3mm or \hskip 3mm \raisebox{-0.4\height}{\begin{texdraw}
            \drawdim em \setunitscale 0.15 \linewd 0.4
            \move(5 10) \utrio \utrio \utrio \utrio \utrio
            \rmove(0 -50) \ltriz \ltriz \ltriz \ltriz \ltriz
            \move(5 10) \bsegment \lvec(0 -10) \rmove(10 10) \rlvec(0 -10) \vtext(5, -8){...} \esegment
            \move(5 60) \bsegment \lvec(0 10) \rmove(10 -10) \rlvec(0 10) \vtext(5, 2){...} \esegment
            \move(0 30) \rlvec(20 0)
        \end{texdraw}}\hskip 3mm .
    \end{center}
    The horizontal lines are called the \textit{standard lines}.
    \item[(2)] We identify the following columns:
        \begin{equation} \label{col: Chapter 5}
            \begin{aligned}
                \raisebox{-0.7\height}{\begin{texdraw}
                    \drawdim em \setunitscale 0.15 \linewd 0.4
                    \move(5 10) \utriz \utriz \rlvec(0 20) \utriz \utriz \rmove(0 -60) \ltrio \ltrio \rmove(0 20) \ltrio \ltrio
                    \rmove(10 -40) \rlvec(0 20)
                    \rmove(-15 -30) \rlvec(20 0)
                    \move(5 10) \bsegment \lvec(0 -10) \rmove(10 10) \rlvec(0 -10) \vtext(5, -8){...} \esegment
                    \move(10 45) \bsegment \vtext(0, -8){...} \esegment
                    \move(16 20) \clvec(19 20)(19 23)(19 23) \rlvec(0 20)
                    \move(21 45) \clvec(19 45)(19 47)(19 47)
                    \move(21 45) \clvec(19 45)(19 43)(19 43)
                    \move(16 70) \clvec(19 70)(19 67)(19 67) \rlvec (0 -20)
                    \htext(23 43){$k$}
                \end{texdraw}} \ \ \ & = & \raisebox{-0.7\height}{\begin{texdraw}
                    \drawdim em \setunitscale 0.15 \linewd 0.4
                    \move(5 10) \utrio \utrio \rmove(0 -20) \ltriz \ltriz
                    \rlvec(0 40) \rmove(10 -40) \rlvec(0 40) \rmove(-10 -30) \rlvec(10 0) \rmove(-10 10) \rlvec(10 0) \rmove(-10 10) \rlvec(10 0)
                    \rmove(-15 10) \rlvec(20 0)
                    \move(5 10) \bsegment \lvec(0 -10) \rmove(10 10) \rlvec(0 -10) \vtext(5, -8){...} \esegment
                    \move(10 50) \bsegment \vtext(0, -8){...} \esegment
                    \move(16 30) \clvec(19 30)(19 33)(19 33) \rlvec(0 15)
                    \move(21 50) \clvec(19 50)(19 52)(19 52)
                    \move(21 50) \clvec(19 50)(19 48)(19 48)
                    \move(16 70) \clvec(19 70)(19 67)(19 67) \rlvec (0 -15)
                    \htext(23 48){$k$}
                \end{texdraw}} \\ \\
                \raisebox{-0.7\height}{\begin{texdraw}
                    \drawdim em \setunitscale 0.15 \linewd 0.4
                    \move(5 10) \utrio \utrio \rlvec(0 20) \utrio \rmove(0 -50) \ltriz \ltriz \rmove(0 20) \ltriz
                    \rmove(10 -30) \rlvec(0 20)
                    \rmove(-15 -30) \rlvec(20 0)
                    \move(5 10) \bsegment \lvec(0 -10) \rmove(10 10) \rlvec(0 -10) \vtext(5, -8){...} \esegment
                    \move(10 45) \bsegment \vtext(0, -8){...} \esegment
                    \move(16 20) \clvec(19 20)(19 23)(19 23) \rlvec(0 15)
                    \move(21 40) \clvec(19 40)(19 42)(19 42)
                    \move(21 40) \clvec(19 40)(19 38)(19 38)
                    \move(16 60) \clvec(19 60)(19 57)(19 57) \rlvec (0 -15)
                    \htext(23 38){$k$}
                \end{texdraw}} \ \ \ & = & \raisebox{-0.7\height}{\begin{texdraw}
                    \drawdim em \setunitscale 0.15 \linewd 0.4
                    \move(5 10) \utriz \utriz \rmove(0 -20) \ltrio \ltrio
                    \rlvec(0 30) \rmove(10 -30) \rlvec(0 30) \rmove(-10 -20) \rlvec(10 0) \rmove(-10 10) \rlvec(10 0) \rmove(-15 10) \rlvec(20 0)
                    \move(5 10) \bsegment \lvec(0 -10) \rmove(10 10) \rlvec(0 -10) \vtext(5, -8){...} \esegment
                    \move(10 50) \bsegment \vtext(0, -8){...} \esegment
                    \move(16 30) \clvec(19 30)(19 33)(19 33) \rlvec(0 10)
                    \move(21 45) \clvec(19 45)(19 47)(19 47)
                    \move(21 45) \clvec(19 45)(19 43)(19 43)
                    \move(16 60) \clvec(19 60)(19 58)(19 58) \rlvec (0 -10)
                    \htext(23 43){$k$}
                \end{texdraw}}
            \end{aligned}
        \end{equation}
    \item[(3)] We stack 0-blocks one-by-one and we stack two 1-blocks successively taking the identification (\ref{col: Chapter 5}) into account.
    \item[(4)] Under the identification (\ref{col: Chapter 5}), we can stack the blocks in the front repeatedly, but \textit{not} in the back unless there are blocks in the front with the same height.
    \item[(5)] The \textit{basic ground-state column} is defined to be
        \begin{equation*}
            \raisebox{-0.4\height}{\begin{texdraw}
                \drawdim em \setunitscale 0.15 \linewd 0.4
                \move(5 10) \utriz \utriz
                \rmove(0 -20) \ltrio \ltrio
                \move(5 10) \bsegment \lvec(0 -10) \rmove(10 10) \rlvec(0 -10) \vtext(5, -8){...} \esegment
                \move(0 30) \rlvec(20 0)
            \end{texdraw}} \hskip 3mm = \hskip 3mm \raisebox{-0.4\height}{\begin{texdraw}
                \drawdim em \setunitscale 0.15 \linewd 0.4
                \move(5 10) \utrio \utrio
                \rmove(0 -20) \ltriz \ltriz
                \move(5 10) \bsegment \lvec(0 -10) \rmove(10 10) \rlvec(0 -10) \vtext(5, -8){...} \esegment
                \move(0 30) \rlvec(20 0)
            \end{texdraw}} \ .
        \end{equation*}
\end{enumerate}

A column obtained from the basic ground-state column by adding or removing some $\delta$-blocks is called a \textit{ground-state column}.

Let $G$ be a ground-state column and let $C$ be a column obtained by stacking finitely many blocks on $G$. Let $Z$ be the basic ground-state column. We define
\begin{equation}
    \begin{aligned}
        s       & = s(C) = \text{the number of 0-blocks in $C$ above $G$}, \\
        t       & = t(C) = \text{the number of 1-blocks in $C$ above $G$}, \\
        \bar{s} & = \bar{s}(C) = \begin{cases}
            \text{the number of 0-blocks in $C$ above $Z$ if $C$ lies above $Z$}, \\
            -(\text{the number of 0-blocks in $Z$ above $C$ if $Z$ lies above $C$}),
        \end{cases} \\
        \bar{t} & = \bar{t}(C) = \begin{cases}
            \text{the number of 1-blocks in $C$ above $Z$ if $C$ lies above $Z$}, \\
            -(\text{the number of 1-blocks in $Z$ above $C$ if $Z$ lies above $C$}).
        \end{cases}
    \end{aligned}
\end{equation}
\begin{ex} \hfill
    \begin{enumerate}
      \item Let $C = $ \hpic \raisebox{-0.7\height}{\begin{texdraw}
                \drawdim em \setunitscale 0.15 \linewd 0.4
                \move(5 10) \utrio \utrio \utrio \utrio \utrio \utrio
                \rmove(0 -60) \ltriz \ltriz \ltriz \ltriz
                \move(5 10) \bsegment \lvec(0 -10) \rmove(10 10) \rlvec(0 -10) \vtext(5, -8){...} \esegment
                \move(0 10) \rlvec(20 0) \htext(22 8){$Z$}
                \move(0 30) \rlvec(20 0) \htext(22 28){$G$}
            \end{texdraw}} \ . \vpic
            Then $s=2$, $t=4$, $\bar{s} = 4$, $\bar{t} = 6$. \vpic
      \item Let $C = $ \hpic \raisebox{-0.7\height}{\begin{texdraw}
                \drawdim em \setunitscale 0.15 \linewd 0.4
                \move(5 10) \utriz \utriz \utriz \utriz \utriz \utriz \utriz
                \rmove(0 -70) \ltrio \ltrio \ltrio \ltrio
                \move(5 10) \bsegment \lvec(0 -10) \rmove(10 10) \rlvec(0 -10) \vtext(5, -8){...} \esegment
                \move(0 10) \rlvec(20 0) \htext(22 8){$G$}
                \move(0 30) \rlvec(20 0) \htext(22 28){$Z$}
            \end{texdraw}} \ . \vpic
            Then $s=7$, $t=4$, $\bar{s} = 5$, $\bar{t} = 2$. \vpic
      \item Let $C = $ \hpic \raisebox{-0.7\height}{\begin{texdraw}
                \drawdim em \setunitscale 0.15 \linewd 0.4
                \move(5 5) \utrio \utrio \utrio \utrio
                \rmove(0 -40) \ltriz \ltriz \ltriz
                \rmove(0 10) \rlvec(10 10) \rlvec(-10 0) \rlvec(10 10) \rlvec(0 -30) \rmove(-10 30) \rlvec(0 -30)
                \move(0 5) \rlvec(20 0) \htext(22 3){$G$}
                \move(0 65) \rlvec(20 0) \htext(22 63){$Z$}
            \end{texdraw}} \ . \vpic
            Then $s=3$, $t=4$, $\bar{s} = -3$, $\bar{t} = -2$.
    \end{enumerate}
\end{ex}

Note that any column $C$ of our \Yw\ model is uniquely determined by $s$, $\bar{s}$, $\bar{t}$.

Let $\C$ be the set of all columns of our \Yw\ model and let
\begin{equation*}
    \BBaff_{\leq 0} = \set{(x,y)(-m) ~|~ x,y \in \Z, \ m \in \Z_{\geq 0}}.
\end{equation*}
Define a bijection $\Psi: \C \to \BBaff_{\leq 0}$ by
\begin{equation}
    \Psi (s, \bar{s}, \bar{t}) = \begin{cases}
        \left( \bar{s} - \frac{1}{2} \bar{t}, \frac{1}{2} \bar{t} \right) (-s)      & \text{if} \quad \bar{s} \geq \bar{t}, \\
        \left( -\frac{1}{2} \bar{t}, -\bar{s} + \frac{1}{2} \bar{t} \right) (-s)    & \text{if} \quad \bar{s} < \bar{t}.
    \end{cases}
\end{equation}
Then one can easily check that $\Psi$ commutes with $\tilde{e}_i$, $\tilde{f}_i$ ($i=0,1$) whenever all the maps involved send non-zero vectors to non-zero vectors.

Let $C = \langle s, t, \bar{s}, \bar{t} \rangle$, $C^{'} = \langle
s', t', \bar{s'}, \bar{t'} \rangle \in \C$. Using the isomorphism
$\Psi$, the affine energy function $H$ can be written as
$$H(\Psi(C) \otimes \Psi(C')) = -s + s' -h(C,C'),$$
where
\begin{equation} \label{eq:h1-b}
h(C,C')=\begin{cases} \text{max}\left(\begin{aligned}&
\bar{s}-\bar{s'}, \bar{s}+\bar{s'}-2 \bar{t'}, \\ &
\bar{s'}-\bar{s}, \bar{s'}-3 \bar{s} + 2 \bar{t}
\end{aligned}\right) & \ \ \text{if} \ \
\bar{s} \ge \bar{t}, \ \bar{s'} \ge \bar{t'}, \\
\text{max} \left(\begin{aligned}& \bar{s}+\bar{s'}, - 3 \bar{s}-
\bar{s'} + 2 \bar{t},  \\ &  -\bar{s}-\bar{s'},
   \bar{s}+3 \bar{s'} - 2 \bar{t'}\end{aligned}\right) & \ \
\text{if} \ \ \bar{s} \ge \bar{t}, \ \bar{s'} < \bar{t'},
\\
\text{max} \left(\begin{aligned} &  - \bar{s} - \bar{s'},  - \bar{s}
+
\bar{s'} + 2 \bar{t}, \\
& \bar{s} + \bar{s'}, - \bar{s} + \bar{s'} -2 \bar{t'}\end{aligned}
\right) &  \ \ \text{if} \ \bar{s}< \bar{t},
\bar{s'} \ge \bar{t'}, \\
\text{max} \left(\begin{aligned} & \bar{s} - \bar{s'}, - \bar{s} -
\bar{s'} + 2 \bar{t}, \\ & \bar{s'} - \bar{s}, -\bar{s} + 3 \bar{s'}
-2 \bar{t'} \end{aligned}\right) & \ \ \text{if} \ \bar{s} <
\bar{t}, \ \bar{s'} < \bar{t'}.
\end{cases}
\end{equation}

The notion of \textit{(basic) ground-state \Yws}, \textit{\Yws},
\textit{\rYws}, \textit{$i$-signatures} and \textit{Kashiwara
operators} are defined in the same way as in Section \ref{Sec:
Crystal structure}. Let $\Y (\infty)$ (resp. $\R (\infty)$) denote
the set of all \Yws\ (resp. \rYws). For a \Yw\ $Y = (Y_k)_{k \geq
0}$, we define
\begin{enumerate}
    \item $\wt (Y) = -k_0 \alpha_0 - \frac{1}{2} k_1 \alpha_1$, where $k_i$ is the number of $i$-blocks above the ground-state wall of $Y$ $(i=0,1)$,
    \item $\veps_i (Y)$ is the number of $-$'s in the $i$-signature of $Y$,
    \item $\vphi_i (Y)$ is the number of $+$'s in the $i$-signature of $Y$.
\end{enumerate}

Using the same argument in Section \ref{Sec: Crystal structure}, one can prove:

\begin{thm} \hfill

    {\rm (a)} The sets $\Y (\infty)$ and $\R (\infty)$ are \UAcrys.

    {\rm (b)} There are \UAcry\ isomorphisms
    \begin{equation*}
        \begin{array}{ccccc}
            \R (\infty) & \stackrel{\sim}{\longrightarrow}  & \PP^\textnormal{aff}(\infty) & \stackrel{\sim}{\longrightarrow}    & \BB (\infty) \\
            Y_\infty    & \longmapsto                       & \mathbf{b}^\textnormal{aff}_\infty  & \longmapsto           & \mathbf{1},
        \end{array}
    \end{equation*}
    where $Y_\infty$ is the basic ground-state wall, $\mathbf{b}^\textnormal{aff}_\infty$ is the affine ground-state path in $\PP^\textnormal{aff}(\infty)$ and $\mathbf{1}$ is the highest weight vector of $\BB (\infty)$.
\end{thm}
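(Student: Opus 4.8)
The plan is to transcribe the argument of Section \ref{Sec: Crystal structure} almost verbatim, replacing the finite adjoint crystal $\BBad$ by the limit crystal $\BB = \Z \times \Z$ and the target $\PPaff(\lambda, \mathbf{0})$ by $\PPaff(\infty)$. The backbone is the column-level bijection $\Psi \colon \C \to \BBaff_{\leq 0}$ recorded just above, which commutes with $\tilde{e_i}$, $\tilde{f_i}$ $(i=0,1)$ whenever all maps involved send non-zero vectors to non-zero vectors. First I would establish the exact analogue of Lemma \ref{Lemma 4.3}: for a \Yw\ $Y = (Y_k)_{k \geq 0}$ and $i = 0,1$, (a) $\vphi_i(Y_{k+1}) - \veps_i(Y_k) = \vphi_i(\Psi(Y_{k+1})) - \veps_i(\Psi(Y_k))$ for all $k$, (b) $\Psi(\tilde{E_i} Y) = \tilde{e_i} \Psi(Y)$ when $\tilde{E_i} Y \neq 0$, and (c) $\Psi(\tilde{F_i} Y) = \tilde{f_i} \Psi(Y)$ when $\tilde{F_i} Y \neq 0$. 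As the remark following Lemma \ref{Lemma 4.3} already notes, its proof uses only the column isomorphism $\Psi$, the tensor product rule \eqref{eq:tensor}, and the fact that $H$ is constant on connected components of $\BBaff \ot \BBaff$; since none of these invokes perfectness of the underlying crystal, the same chain of inequalities bounding $\vphi_i(Y_{k+1})$ and $\veps_i(Y_k)$ via $H(\tilde{f_i}^{\,l} Y_{k+1} \ot Y_k)$ carries over without change.

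For part (a) it then remains to verify the crystal axioms for $\Y(\infty)$ and $\R(\infty)$, the only nontrivial one being $\langle h_i, \wt(Y) \rangle = \vphi_i(Y) - \veps_i(Y)$. Following the proof that $\Y(\lambda)$ is a crystal, I would compute $\wt \Psi(Y_k)$ from the definition of $\Psi$ together with the weight formula $\wt(x,y) = 2(y-x)\Lambda_0 + (x-y)\Lambda_1$ for $\BB = \Z \times \Z$, and sum over the finitely many columns lying above the ground-state wall to obtain $\wt \Psi(Y) = -k_0 \alpha_0 - \tfrac{1}{2} k_1 \alpha_1 = \wt(Y)$, where $k_i$ is the number of $i$-blocks above the ground-state wall. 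The desired identity follows by telescoping $\sum_{k \geq 0} (\vphi_i(\Psi(Y_k)) - \veps_i(\Psi(Y_k)))$ and invoking part (a) of the key lemma to justify the cancellation of signatures across adjacent columns, exactly as in the level-$l$ case.

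For part (b) I would show that $\Psi$ restricts to a \UAcry\ isomorphism $\R(\infty) \xrightarrow{\sim} \PPaff(\infty)$. By definition a \Yw\ is reduced precisely when $H(Y_{k+1} \ot Y_k) = 0$ for all $k$, which under $\Psi$ is exactly the defining condition (iii) of an affine $\infty$-path, while conditions (i) and (ii) correspond to the finiteness of the stacked region and to $Y_k$ being the basic ground-state column for $k \gg 0$. That $\R(\infty)$ is stable under $\tilde{E_i}$ and $\tilde{F_i}$ follows because $H$ is constant on connected components by \eqref{eq:affineE}, so applying a Kashiwara operator to a reduced wall preserves the vanishing $H(Y_{k+1} \ot Y_k) = 0$; combined with parts (b) and (c) of the key lemma this makes $\Psi$ a strict crystal morphism, and it is bijective because $\Psi \colon \C \to \BBaff_{\leq 0}$ is. Finally I would compose with the isomorphism $\PPaff(\infty) \xrightarrow{\sim} \BB(\infty)$ already obtained from the coherent-family argument of \cite{KKM}, tracking $Y_\infty \mapsto \mathbf{b}^\textnormal{aff}_\infty \mapsto \mathbf{1}$.

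The one genuine departure from Section \ref{Sec: Crystal structure}, and the main point requiring care, is that $\BB = \Z \times \Z$ is not itself a perfect crystal but only the limit of the coherent family $\set{\BB_l ~|~ l \geq 1}$. Consequently the identity $\PPaff(\infty) \cong \BB(\infty)$ cannot be quoted from the perfect-crystal path theorem of \cite{KMN1} and must instead rest on the coherent-family machinery of \cite{KKM} through the inductive system $f_{l,(a,a)}$. Before transcribing the Section \ref{Sec: Crystal structure} argument I would therefore confirm two facts on the infinite crystal: that $h$ defined by \eqref{eq:h1} remains a classical energy function on $\BB = \Z \times \Z$, and that the tensor-product and energy compatibilities feeding the key lemma continue to hold on $\BBaff \ot \BBaff$. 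Once these are in hand, every remaining step is a verbatim copy of the level-$l$ argument, now with the weights and the quantities $\bar{s}$, $\bar{t}$ allowed to take negative values.
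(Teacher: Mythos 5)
Your proposal is correct and takes essentially the same approach as the paper, whose entire proof consists of the remark that the theorem follows ``using the same argument in Section \ref{Sec: Crystal structure}'': you transcribe the level-$l$ chain of arguments (the analogue of Lemma \ref{Lemma 4.3}, the weight computation giving $\langle h_i, \wt(Y)\rangle = \vphi_i(Y) - \veps_i(Y)$ for part (a), and the reducedness-preservation and energy-matching argument identifying $\R(\infty)$ with $\PPaff(\infty)$ for part (b)), with $\BBad$ replaced by $\BB = \Z \times \Z$ and the final isomorphism $\PPaff(\infty) \stackrel{\sim}{\to} \BB(\infty)$ supplied by the coherent-family machinery of \cite{KKM}. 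Your explicit flagging of the one delicate point --- that $\BB = \Z \times \Z$ is not perfect, so the energy function property of \eqref{eq:h1} and the path realization cannot be quoted from \cite{KMN1} but must rest on the coherent-family setup --- is exactly the gap the paper leaves implicit, and you handle it correctly.
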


\begin{ex}
    We illustrate part of the \UAcry\ $\R (\infty)$.
        \begin{center}
        \begin{texdraw}
            \drawdim em \setunitscale 0.15 \linewd 0.4 \arrowheadtype t:V \arrowheadsize l:3 w:3

            \move(5 20) \rlvec(0 20) \rlvec(30 0) \rlvec(0 -30) \rlvec(-10 0) \rlvec(0 10) \rlvec(-20 0) \lfill f:0.8
            \rmove(20 -10) \setgray 1 \rlvec(10 0) \setgray 0
            \rmove(-30 10) \utriz \utriz \utriz \rmove(10 -30) \utriz \utriz \utriz \utriz \rmove(10 -40) \utrio \utrio \utrio \utrio \utrio \utrio
            \rmove(-20 -60) \ltrio \ltrio \rmove(10 -20) \ltrio \ltrio \ltrio \ltrio \rmove(10 -40) \ltriz \ltriz \ltriz \ltriz
            \move(0 40) \rlvec(30 0) \rmove(-10 40) \rlvec(20 0)
            \move(5 20) \bsegment \lvec(0 -10) \rmove(10 10) \rlvec(0 -10) \vtext(5, -8){...} \esegment
            \move(15 20) \bsegment \lvec(0 -10) \rmove(10 10) \rlvec(0 -10) \vtext(5, -8){...} \esegment
            \move(25 20) \bsegment \lvec(0 -10) \rmove(10 10) \rlvec(0 -10) \vtext(5, -8){...} \esegment

            \move(145 10) \rlvec(0 30) \rlvec(30 0) \rlvec(0 -20) \rlvec(-20 0) \rlvec(0 -10) \rlvec(-10 0) \lfill f:0.8
            \setgray 1 \rlvec(10 0) \setgray 0
            \rmove(-10 10) \utriz \utriz \rmove(10 -20) \utriz \utriz \utriz \utriz \rmove(10 -40) \utrio \utrio \utrio \utrio \utrio \utrio \utrio \utrio
            \rmove(-20 -80) \ltrio \ltrio \rmove(10 -20) \ltrio \ltrio \ltrio \ltrio \rmove(10 -40) \ltriz \ltriz \ltriz \ltriz
            \move(140 40) \rlvec(30 0) \rmove(-10 40) \rlvec(20 0)
            \move(145 20) \bsegment \lvec(0 -10) \rmove(10 10) \rlvec(0 -10) \vtext(5, -8){...} \esegment
            \move(155 20) \bsegment \lvec(0 -10) \rmove(10 10) \rlvec(0 -10) \vtext(5, -8){...} \esegment
            \move(165 20) \bsegment \lvec(0 -10) \rmove(10 10) \rlvec(0 -10) \vtext(5, -8){...} \esegment

            \move(20 10) \bsegment \vtext(0, -8){...} \esegment
            \move(160 10) \bsegment \vtext(0, -8){...} \esegment

            \move(75 60) \rlvec(0 20) \rlvec(30 0) \rlvec(0 -20) \rlvec(-30 0) \lfill f:0.8
            \rmove(0 0) \utriz \utriz \rmove(10 -20) \utriz \utriz \utriz \utriz \rmove(10 -40) \utrio \utrio \utrio \utrio \utrio \utrio
            \rmove(-20 -60) \ltrio \ltrio \rmove(10 -20) \ltrio \ltrio \ltrio \ltrio \rmove(10 -40) \ltriz \ltriz \ltriz \ltriz
            \move(70 80) \rlvec(30 0) \rmove(-10 40) \rlvec(20 0)
            \move(75 60) \bsegment \lvec(0 -10) \rmove(10 10) \rlvec(0 -10) \vtext(5, -8){...} \esegment
            \move(85 60) \bsegment \lvec(0 -10) \rmove(10 10) \rlvec(0 -10) \vtext(5, -8){...} \esegment
            \move(95 60) \bsegment \lvec(0 -10) \rmove(10 10) \rlvec(0 -10) \vtext(5, -8){...} \esegment
            \move(62 75) \ravec(-14 -8) \htext(53 76){0}
            \move(119 75) \ravec(14 -8) \htext(126 76){1}

            \move(70 140) \rlvec(0 30) \rlvec(10 0) \rlvec(0 -30) \rlvec(-10 0) \lfill f:0.8
            \setgray 1 \rlvec(10 0) \setgray 0
            \rmove(-10 10) \utrio \utrio \utrio \utrio \rmove(0 -40) \ltriz \ltriz \ltriz
            \rmove(0 -30) \bsegment \lvec(0 -10) \rmove(10 10) \rlvec(0 -10) \vtext(5, -8){...} \esegment
            \move(65 190) \rlvec(20 0)

            \move(100 140) \rlvec(0 30) \rlvec(10 0) \rlvec(0 -30) \rlvec(-10 0) \lfill f:0.8
            \setgray 1 \rlvec(10 0) \setgray 0
            \rmove(-10 10) \utrio \utrio \utrio \utrio \rmove(0 -40) \ltriz \ltriz \ltriz
            \rmove(0 -30) \bsegment \lvec(0 -10) \rmove(10 10) \rlvec(0 -10) \vtext(5, -8){...} \esegment
            \move(95 170) \rlvec(20 0)

            \move(90 135) \bsegment \vtext(0, -8){...} \esegment

            \move(25 190) \rlvec(0 30) \rlvec(10 0) \rlvec(0 -30) \rlvec(-10 0) \lfill f:0.8
            \setgray 1 \rlvec(10 0) \setgray 0
            \rmove(-10 10) \utriz \utriz \utriz \rmove(0 -30) \ltrio \ltrio
            \rmove(-5 0) \rlvec(20 0)
            \move(30 200) \bsegment \vtext(0, -8){...} \esegment

            \move(145 190) \rlvec(0 30) \rlvec(10 0) \rlvec(0 -30) \rlvec(-10 0) \lfill f:0.8
            \setgray 1 \rlvec(10 0) \setgray 0
            \rmove(-10 10) \utrio \utrio \utrio \utrio \rmove(0 -40) \ltriz \ltriz
            \rmove(-5 0) \rlvec(20 0)
            \move(150 200) \bsegment \vtext(0, -8){...} \esegment

            \move(43 197) \ravec(14 -8) \htext(49 199){1}
            \move(133 197) \ravec(-14 -8) \htext(125 199){0}

            \htext(72 248){$Y_\infty = $} \move(95 230) \rlvec(0 30) \rlvec(10 0) \rlvec(0 -30) \rlvec(-10 0) \lfill f:0.8
            \setgray 1 \rlvec(10 0) \setgray 0
            \rmove(-10 10) \utriz \utriz \rmove(0 -20) \ltrio \ltrio
            \rmove(-5 0) \rlvec(20 0)
            \move(100 240) \bsegment \vtext(0, -8){...} \esegment

            \move(57 235) \ravec(-14 -4) \htext(49 238){0}
            \move(119 235) \ravec(14 -4) \htext(125 238){1}
        \end{texdraw}
    \end{center}
\end{ex}

\begin{thebibliography}{10}

\bibitem{BFKL}
G. Benkart, I. Frenkel, S.-J. Kang, H. Lee, \emph{Level 1 perfect crystals and path realizations of basic representations at $q=0$}, Int. Math. Res. Notices \textbf{2006} (2006), 1--28.

\bibitem{FOS}
G. Fourier, M. Okado, A. Schilling, \emph{Perfectness of Kirillov-Reshetikhin crystals for non-exceptional type}, Contemp. Math., \textbf{506} (2010), 127--143.

\bibitem{HK}
J. Hong, S.-J. Kang, \emph{Introduction to Quantum Groups and Crystal Bases}, Grad. Stud. Math. {\bf 42}, Amer. Math. Soc., 2002.

\bibitem{Jung}
J.H. Jung, S.-J. Kang, M. Kim, S. Kim, J.-Y. Yu, \emph{Adjoint crystals and Young walls for $U_q(\widehat{\mathfrak{sl_2}})$}, European J. Comb. \textbf{31} (2010), 738--758.

\bibitem{Kac90}
V.G. Kac, \emph{Infinite Dimensional Lie Algebras}, 3rd ed., Cambridge University Press, 1990.

\bibitem{Kang03}
S.-J. Kang, \emph{Crystal bases for quantum affine algebras and combinatorics of Young walls}, Proc. Lond. Math. Soc. \textbf{(3) 86} (2003), 29--69.

\bibitem{KKM}
S.-J. Kang, M. Kashiwara, K. C. Misra, \emph{Crystal bases of Verma modules for quantum affine Lie algebras}, Compos. Math. \textbf{92} (1994), 299--325

\bibitem{KL}
S.-J. Kang, H. Lee, \emph{Crystal bases for quantum affine algebras and Young walls}, J. Algebra \textbf{322} (2009), 1979--1999.

\bibitem{KMN1}
S.-J. Kang, M. Kashiwara, K. C. Misra, T. Miwa, T. Nakashima, A. Nakayashiki, \emph{Affine crystals and vertex models}, Int. J. Mod. Phys. A. \textbf{Suppl. 1A} (1992), 449--484.

\bibitem{KMN2}
\bysame, \emph{Perfect crystals of quantum affine {L}ie algebras}, Duke Math. J. \textbf{68} (1992), 499--607.

\bibitem{SS}
A. Schilling, P. Sternberg, \emph{Finite-dimensional crystals $B^{2,s}$ for quantum affine algebras of type $D_{n}^{(1)}$}, J. Alg. Comb. \textbf{23} (2006), 317--354.
\textbf{23} (2006), 317--354.

\end{thebibliography}

\end{document}